\newtheorem{theorem}{Theorem}
\newtheorem{lemma}[theorem]{Lemma}
\newtheorem{corollary}[theorem]{Corollary}
\newtheorem{observation}[theorem]{Observation}
\newcommand{\T}{{\mathcal T}}
\title{Sharp upper and lower bounds on a restricted class of convex characters}
\author{Steven Kelk}
\author{Ruben Meuwese}
\address{Department of Data Science and Knowledge Engineering (DKE), Maastricht University, P.O. Box 616, 6200 MD Maastricht, The Netherlands}
\email{steven.kelk@maastrichtuniversity.nl}
\email{ruben.meuwese@maastrichtuniversity.nl}
\begin{document}

\begin{abstract}
Let $\T$ be an unrooted binary tree with $n$ distinctly labelled leaves. Deriving its name from the field of phylogenetics, a \emph{convex character} on $\T$ is simply a partition of the leaves such that the minimal spanning subtrees induced by the blocks of the partition are mutually disjoint. 
In earlier work Kelk and Stamoulis (\emph{Advances in Applied Mathematics} 84 (2017), pp. 34--46)
defined $g_k(\T)$ as the number of convex characters where each block has at least $k$ leaves. Exact expressions were given for $g_1$ and $g_2$, where the topology of $\T$ turns out to be irrelevant, and it was noted that for $k \geq 3$ topological neutrality no longer holds. In this article, for every $k \geq 3$ we describe tree topologies achieving the maximum and minimum values of $g_k$ and determine corresponding expressions and exponential bounds for $g_k$. Finally, we reflect briefly on possible algorithmic applications of these results.

\end{abstract}

\maketitle

\section{Introduction}
\label{sec:intro}

Consider an unrooted, undirected binary tree $\T$ where the $n$ leaves are bijectively labelled by a set $X$ of labels. In this article we are interested in  partitions of $X$ whereby the minimal spanning trees induced by the blocks, are disjoint in $\T$. In the field of mathematical phylogenetics, where $\T$ represents a hypothesis about the evolutionary history of a set of contemporary species $X$ and interior nodes represent hypothetical ancestors, such partitions are called \emph{convex characters} \cite{semple_steel_2003}. Convex characters are of interest because they represent a `most parsimonious' evolutionary scenario. Specifically, a block of the partition can be viewed as a subset of $X$ that shares a certain trait e.g. has a backbone. The fact that the induced spanning trees do not overlap, models the situation whereby these traits can be extended to ancestors in $\T$, such that all contemporary and ancestral species that share the trait form a connected component. In other words: the trait emerges once in history; it does not vanish and re-emerge multiple times. Convex characters, which are sometimes called \emph{homoplasy-free} characters due to the absence of recurrent mutation, 
play a central role in 
the well-studied \emph{perfect phylogeny} problem
\cite{kannan1997fast} and a number of other combinatorial and algorithmic problems inspired by phylogenetics (e.g. \cite{moran2007efficient, fischer2014maximum, alexeev2018combinatorial}).
We refer to standard texts such as \cite{steel2016phylogeny,felsenstein2004inferring} for more background on (mathematical) phylogenetics.

In \cite{kelk2017note} the question was posed: how many convex characters are there on $\T$ where each block has at least $k$ leaves, denoted $g_k(\T)$? The authors proved that for $k \in \{1,2\}$ the answer is a Fibonacci number, and  independent of the topology of $\T$, but that for $k=3$ the topology of the tree does matter. This raised the question of establishing lower and upper bounds on $g_k$, for $k \geq 3$.

In this article we give, for every $k\geq 3$, sharp upper and lower bounds. We do this by identifying tree topologies that provably obtain the maximum and minimum and determining $g_k$ on these trees. The maximum is attained by caterpillar trees, and the minimum by trees we call \emph{fully $k$-loaded trees}. Informally, fully $k$-loaded trees are trees in which as often as possible the leaves are organized in size $k-1$ clusters at the periphery of the tree. For the lower bound, an exact expression is given: it grows at $\Theta( \phi^{\frac{n}{k-1}})$ where $\phi \approx 1.618$ is the golden ratio. For the upper bound, an exact expression and an exponential rate of growth can be obtained by determining the real positive root of a characteristic polynomial induced by a \textcolor{black}{homogeneous linear recurrence}, and then solving for initial conditions.

In Section \ref{sec:prelim} we establish preliminaries. In Section \ref{sec:cat} we prove the upper bound. In Section \ref{sec:full} we prove the lower bound and pause briefly to study the behaviour of $g_3$ between its upper and lower bounds. In Section \ref{sec:app} we reflect on potential algorithmic applications of these results, noting that algorithms listing all $g_k$ characters speed up considerably as $k$ increases. Finally, in Section \ref{sec:future} we list a number of open problems.



\section{Preliminaries}\label{sec:prelim}

We note that the results in the article are not specific to phylogenetics: they apply to undirected binary trees with distinctly labelled leaves. However, to ensure consistency with the phylogenetics literature that inspired the research we adopt standard phylogenetic notation.

 An {\it unrooted binary phylogenetic $X$-tree} is an undirected, unrooted tree $\T =(V(\T),E(\T))$ where every internal vertex has degree 3 and whose leaves are bijectively labelled by a set $X$, where $X$ is
often called the set of \emph{taxa} (representing the contemporary species). All the trees in this articles are unrooted binary phylogenetic trees, so we simply write \emph{tree} for brevity.  
We let $n=|X|$. For $X' \subseteq X$ we write $\T[X']$ to denote the unique minimal subtree of $\T$ that spans $X'$, and $\T|X'$ to denote the phylogenetic tree obtained from $\T[X']$ by repeatedly suppressing nodes of degree 2. For $X' \subset X$ we define $\T \setminus X'$ to be $\T|(X \setminus X')$.

A \emph{character} $f$ on $X$ is simply a partition of $X$ into blocks (i.e. \emph{non-empty} subsets) $X_1, X_2, ..., X_m$. Following the phylogenetics literature we will often refer to the blocks of the partition as \emph{states}. We write $X_1|X_2|...|X_m$ to denote that the subsets $X_1, X_2, ..., X_m$ form a  partition/character of $X$.
A character $X_1|X_2|...|X_m$ is said to be \emph{convex} on $\T$ if, for each $i \neq j$,  $T[X_i]$ is disjoint from $T[X_j]$.  
The convexity of a character on a tree $\T$ can be tested in polynomial \cite{fitch_1971,hartigan1973minimum} (in fact, linear \cite{bachoore2006convex}) time. \textcolor{black}{For the tree $\T$  shown in Figure \ref{fig:g2}, $abde|c|fg$ is an example of a convex character.}

\begin{figure}[h]
\centering
\includegraphics[scale=0.2]{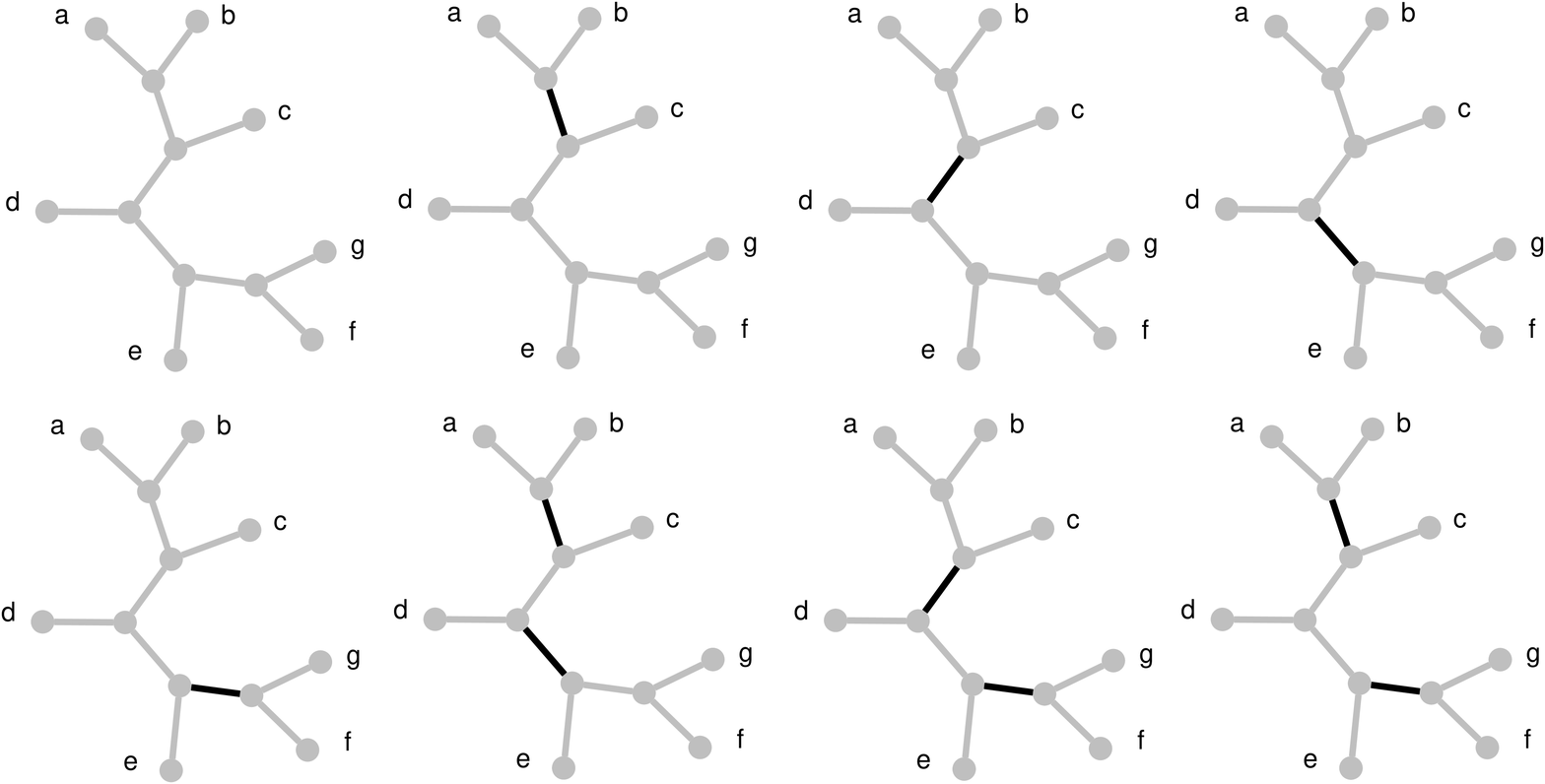}
\caption{For the given tree $\T$ (on 7 taxa) there are 233 convex characters in total,
but only 8 in which each state (i.e. block of the partition) contains at least 2 taxa, and these are shown above. Taxa connected by grey edges are in  the same state of the character. The 3 $g_3$ characters are $abcdefg$, $abc|defg$ and $abcd|efg$, and the single $g_4$ character is $abcdefg$. 
}
\label{fig:g2} 
\end{figure}

Following \cite{kelk2017note}, we define $g_k(\T)$ to be the number of convex characters on $\T$ where each state contains at least $k$ taxa. For shorthand we refer to such characters as $g_k$ characters; note that $g_1$ characters are just convex characters without any further restrictions. See Figure \ref{fig:g2} for an example. In \cite{kelk2017note} it was proven that $g_1(\T)$ and $g_2(\T)$ are equal to the $(2n-1)$th and $(n-1)$th Fibonacci number, respectively. This yielded the following exact expressions, where $\T$ is replaced by $n$ due to the topological neutrality, and $\phi \approx 1.618$ is the golden ratio.
\begin{align*}
g_1(n) &= \bigg \lfloor \frac{\phi^{2n-1}}{\sqrt{5}} + \frac{1}{2} \bigg \rfloor,\\
g_2(n) &= \bigg \lfloor \frac{\phi^{n-1}}{\sqrt{5}} + \frac{1}{2} \bigg \rfloor.
\end{align*}

We will use the following simple observation repeatedly; it shows that for very small $n$ (relative to $k$) $g_k$ is independent of topology.

\begin{observation}
\label{obs:gettingstarted}
Let $\T$ be a tree on $n$ taxa. If $n<k$, $g_k(\T)=0$, and if $k \leq n < 2k$, $g_k(\T)=1$.
\end{observation}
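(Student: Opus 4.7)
The plan is to argue by simple block-size bookkeeping; the statement follows from the fact that the sizes of the blocks of any partition of $X$ must sum to $n$.

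For the first claim, I would note that in any partition $X_1|X_2|\ldots|X_m$ of $X$ we have $\sum_{i=1}^{m} |X_i| = n$. If $n < k$, then necessarily every $|X_i| \leq n < k$, so no block can meet the $g_k$ size requirement. Hence there are no $g_k$ characters at all, giving $g_k(\T) = 0$ regardless of topology.

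For the second claim, I plan to show that exactly one $g_k$ character exists and that it is the trivial one-block partition. Suppose for contradiction some $g_k$ character on $\T$ has $m \geq 2$ blocks. Then $n = \sum_{i=1}^{m} |X_i| \geq mk \geq 2k$, contradicting $n < 2k$. Thus every $g_k$ character must consist of a single block, and the only such partition is $X$ itself. This partition is automatically convex because the convexity condition quantifies over pairs of distinct blocks and so holds vacuously when $m = 1$; moreover the single block has size $n \geq k$, so it does qualify as a $g_k$ character. Therefore $g_k(\T) = 1$, again independent of the topology.

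There is essentially no obstacle here; the argument is purely a pigeonhole on block sizes. The only minor point worth making explicit is that the one-block partition trivially satisfies convexity, since the defining condition ranges only over pairs $i \neq j$ and is therefore vacuous when there is a unique block.
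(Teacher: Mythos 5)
Your argument is correct and is essentially the same counting argument the paper uses: block sizes sum to $n$ and each must be at least $k$, so $n<k$ forbids any $g_k$ character and $n<2k$ forbids more than one block, leaving only the trivial single-state character. Your extra remark that the one-block partition is vacuously convex is a fine (if minor) explicit addition.
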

\begin{proof}
Every character contains at least one state, and each state must have at least $k$ taxa, so for $n<k$ no $g_k$ characters can exist. If $k \leq n$ we have $g_k(\T) \geq 1$ because we can always take the unique convex character with a single size-$n$ state. If \textcolor{black}{$n < 2k$} then it is not possible to have 2 or more $g_k$ characters, because at least one of them would need to have 2 or more states, and each such state must contain at least $k$ taxa.
\end{proof}

Let $A|B$ be a bipartition of $X$.
We say that a tree $\T$ on $X$ contains the \emph{split} $A|B$ if there is a \textcolor{black}{single} edge of $\T$ whose deletion disconnects $\T$ into two components, where $A$ is the set of taxa in one component and $B$ is the set of taxa in the other. The next observation will also be used repeatedly.

\begin{observation}
\label{obs:atleasttwo}
Let $\T$ be a tree on $n$ vertices and suppose $\T$ contains a split $A|B$ such that $|B| \leq k$. Then every $g_k$ character of $\T$ includes a state that is a superset of $B$. If $|B| < k$, then every $g_k$ character of $\T$ includes a state that is a \underline{strict} superset of $B$.
\end{observation}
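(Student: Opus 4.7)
My plan is to analyze, for an arbitrary $g_k$ character $X_1|X_2|\ldots|X_m$ of $\T$, how its states interact with the edge $e$ whose removal realises the split $A|B$. The starting point is that by convexity the spanning trees $\T[X_i]$ are pairwise disjoint, so at most one of them can use the edge $e$. This cleanly splits the argument into two cases.

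In the case where no $\T[X_i]$ uses $e$, every state meeting $B$ is entirely contained in the $B$-side component, so the states partition $B$ into blocks each of size at least $k$; since $|B|\le k$, this forces $B$ itself to be a single state, immediately giving a state that is a superset of $B$. In the other case, exactly one state $X_j$ has $\T[X_j]$ crossing $e$, which means $X_j$ meets both sides, so in particular $X_j \cap B$ is a nonempty proper-or-full subset of $B$, and every other state meeting $B$ is contained in $B$ with size at least $k$. I would then argue that $X_j \cap B$ cannot be a strict subset of $B$: otherwise $B \setminus X_j$ would need to be covered by at least one further state of size $\ge k$ contained in $B$, yielding $|B| \ge k + |X_j \cap B| \ge k+1$, contradicting $|B|\le k$. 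Hence $X_j \supseteq B$ in both cases.

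For the strict refinement under the hypothesis $|B|<k$, the no-crossing case disappears at once, because no state of size $\ge k$ can fit inside $B$; in the surviving case the state $X_j$ that contains $B$ also uses the edge $e$ and therefore contains at least one taxon from $A$, so $X_j \supsetneq B$. I do not anticipate any real obstacle here: the whole argument rests on the fact that disjointness of the spanning trees of a convex character forbids two distinct states from both using $e$, combined with the size lower bound built into the definition of a $g_k$ character, and the rest is a short counting check.
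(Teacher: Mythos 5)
Your proof is correct and rests on the same two pillars as the paper's: the disjointness of the induced spanning subtrees forbids two distinct states from both using the split edge (equivalently, from both meeting $A$ and $B$), and the size-$k$ lower bound on states then forces all of $B$ into a single state. The paper phrases this as a direct contradiction argument on the number of states intersecting $B$ rather than your case split on whether some state crosses the edge, but the mathematical content is the same.
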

\begin{proof}
Firstly, recall that every taxon in $B$ has to appear in some state. Suppose for the sake of contradiction that there is a $g_k$ character such that $B$ intersects with two or more states in the character. Given that $|B| \leq k$, each of these states contains at most $k-1$ taxa from $B$ (because each of the states must include at least one taxon from $B$, and states are disjoint). Hence, each of these states also contains at least one taxon from $A$. But then the subtrees induced by these two (or more) states are not disjoint; in particular, they both use the edge corresponding to split $A|B$, contradicting the convexity of the character. Hence, $B$ intersects with at most one state. If $|B|=k$, then this state must necessarily include all of $B$. If $|B|<k$ then this state must also include at least one taxon from $A$, and we are done.
\end{proof}


The next lemma establishes a useful recurrence.

\begin{lemma}
\label{lem:decrease}
Suppose $\T$ contains a split $A|B$ where $|A| = k$, and let $x$ be an arbitrary element of $A$. Then $g_k(\T) = g_k(\T \setminus A) + g_k(\T \setminus \{x\})$.
\end{lemma}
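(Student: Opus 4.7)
The plan is to partition the $g_k$ characters of $\T$ into two classes and exhibit a bijection from each class to one of the two terms on the right-hand side. Since $|A|=k$, Observation \ref{obs:atleasttwo} (applied with the roles of $A$ and $B$ exchanged) tells me that every $g_k$ character of $\T$ contains a unique state $S$ with $S \supseteq A$. This partitions the characters into (i) those with $S = A$, and (ii) those with $S \supsetneq A$.

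For class (i), removing the state $A$ leaves a partition of $B$ into blocks of size at least $k$. Its convexity on $\T \setminus A$ follows from convexity on $\T$, since every remaining state is a subset of $B$ and its spanning subtree already lay on the $B$-side of the split. Conversely, any $g_k$ character of $\T \setminus A$ extends to a class-(i) character by appending the state $A$. Hence class (i) is in bijection with the $g_k$ characters of $\T \setminus A$.

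For class (ii), the forward map deletes $x$ from both the state $S$ and the ground set. Because $S \supsetneq A$, the new state $S \setminus \{x\}$ has size at least $k$, and the other states (which do not contain $x$) are untouched; convexity on $\T \setminus \{x\}$ follows because deleting a leaf can only shrink spanning subtrees. The inverse map is signalled by Observation \ref{obs:atleasttwo} applied to the split $(A \setminus \{x\}) | B$ of $\T \setminus \{x\}$: since $|A \setminus \{x\}| = k-1 < k$, every $g_k$ character of $\T \setminus \{x\}$ contains a unique state $S'$ that strictly contains $A \setminus \{x\}$, and I put $x$ back into $S'$.

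The main subtlety will be verifying that this inverse map yields a \emph{convex} character on $\T$. The argument I have in mind: since $S'$ already contains all of $A \setminus \{x\}$, every other state is a subset of $B$ and its spanning subtree sits entirely on the $B$-side of the split, so it cannot meet the path from $x$ (which lies on the $A$-side) to the rest of $\T[S' \cup \{x\}]$. Once both bijections are established, summing their contributions gives the claimed recurrence.
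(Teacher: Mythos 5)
Your proposal is correct and follows essentially the same route as the paper: both split the $g_k$ characters according to whether the (unique, by Observation \ref{obs:atleasttwo}) state containing $A$ equals $A$ or strictly contains it, and establish the same two bijections, with the inverse of the second given by re-inserting $x$ into the state forced to contain $A \setminus \{x\}$. Your write-up merely spells out the convexity verifications a little more explicitly than the paper does.
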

\begin{proof}
Consider an arbitrary $g_k$ character $f$ of $\T$. Due to Observation \ref{obs:atleasttwo} and the fact that $|A|=k$, there are only two cases. In the first case, $f$ contains a state \emph{equal} to $A$. There are exactly $g_k(\T \setminus A)$ such characters.

In the second case, $f$ contains a state $X_i$ that is a strict superset of $A$, and which thus also intersects with $B$; the state contains at least $k+1$ taxa. There are $g_k(\T \setminus \{x\})$ such characters. To see this, note that removing $x$ from $X_i$ gives a state that still has at least $k$ taxa, so this yields a $g_k$ character for $\T \setminus \{x\}$. In the other direction, a $g_k$ character for $\T \setminus \{x\}$ necessarily includes a state that contains all of $A \setminus \{x\}$ and at least one taxon from $B$. Adding $x$ to this state yields a state that contains all of $A$, and at least one taxon of $B$.
\end{proof}

\section{Caterpillars maximize $g_k(\T)$ for every $k \geq 1$}
\label{sec:cat}

\begin{figure}[h]
\centering
\includegraphics[scale=0.18]{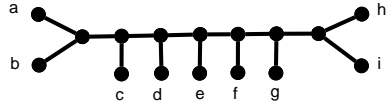}
\caption{A caterpillar tree on 9 taxa.}
\label{fig:cat} 
\end{figure}

We say that two distinct taxa $x,y$ form
a \emph{cherry} of a tree $\T$ if they have a common parent. For $n \geq 4$ a \emph{caterpillar} is a tree on $n$ taxa which
has exactly two cherries. Equivalently, a caterpillar is a tree where all the degree-3
nodes form a path. See Figure \ref{fig:cat} for an example. For convenience we also
regard the unique
trees on 1, 2 or 3 taxa to be caterpillars. We write $Cat_n$ to denote the caterpillar on $n \geq 1$ vertices.

We start by describing a recurrence for $g_k(Cat_n)$ and then prove that this is the maximum value of $g_k(\T)$ ranging over all trees $\T$ with $n$ taxa.

Observation \ref{obs:gettingstarted} establishes the initial values of $g_k(Cat_n)$. 

\begin{lemma}
\label{lem:catrecurse}
For $n > k \geq 2$, $g_k(Cat_n) = g_k(Cat_{n-1}) + g_k(Cat_{n-k})$. As a consequence, $g_k(Cat_n) \in \Theta(\alpha^n)$ where $\alpha$ is the positive real root of the polynomial $x^{k}-x^{k-1}-1$.
\end{lemma}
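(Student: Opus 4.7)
My plan is to derive the recurrence directly from Lemma~\ref{lem:decrease} and then extract the asymptotic rate by a normalization argument. First I would label the taxa of $Cat_n$ as $1, 2, \ldots, n$ along the backbone, with $\{1, 2\}$ forming one end cherry, and set $A = \{1, \ldots, k\}$. For any $n > k \geq 2$ this $A$ is one side of a split of $Cat_n$ (delete the backbone edge immediately past taxon $k$; in the boundary case $k = n-1$, delete the leaf edge incident to taxon $n$). Applying Lemma~\ref{lem:decrease} with this $A$ and any $x \in A$ yields
\[
    g_k(Cat_n) \;=\; g_k(Cat_n \setminus A) \;+\; g_k(Cat_n \setminus \{x\}).
\]
Two structural observations about caterpillars finish the job. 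Deleting the $k$ leaves of $A$ and suppressing degree-2 vertices leaves the caterpillar on $\{k+1, \ldots, n\}$, so $Cat_n \setminus A = Cat_{n-k}$. Deleting any single leaf from a caterpillar and suppressing produces a caterpillar on $n-1$ taxa, whether the removed leaf lies in a cherry or on the backbone interior, so $Cat_n \setminus \{x\} = Cat_{n-1}$. Substituting gives the stated recurrence.

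For the $\Theta(\alpha^n)$ claim, observe first that $p(x) = x^k - x^{k-1} - 1$ has a unique positive real root $\alpha > 1$: writing $p(x) = x^{k-1}(x-1) - 1$, the function $x^{k-1}(x-1)$ is strictly increasing from $0$ to $\infty$ on $[1, \infty)$ and so hits $1$ exactly once, necessarily at a point exceeding $1$. Rather than attacking the full root structure of $p$, I would normalize $b_n := g_k(Cat_n)/\alpha^n$; dividing the recurrence by $\alpha^n$ gives
\[
    b_n \;=\; \frac{1}{\alpha}\, b_{n-1} \;+\; \frac{1}{\alpha^k}\, b_{n-k},
\]
and the characteristic equation $\alpha^k = \alpha^{k-1} + 1$ rewrites as $\tfrac{1}{\alpha} + \tfrac{1}{\alpha^k} = 1$, so this is a genuine convex combination with strictly positive coefficients. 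By Observation~\ref{obs:gettingstarted}, $g_k(Cat_j) = 1$ for $k \leq j \leq 2k-1$, so the $k$ consecutive initial values $b_k, b_{k+1}, \ldots, b_{2k-1}$ all lie in the positive interval $I = [\alpha^{-(2k-1)}, \alpha^{-k}]$. Strong induction on $n \geq 2k$ then keeps $b_n \in I$, giving $g_k(Cat_n) \in \Theta(\alpha^n)$ with explicit multiplicative constants.

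The only real care required is the topological bookkeeping in the first paragraph: one must verify the identifications $Cat_n \setminus A = Cat_{n-k}$ and $Cat_n \setminus \{x\} = Cat_{n-1}$ across the small sub-cases, in particular the boundary case $k = n-1$ where $Cat_{n-k} = Cat_1$ so $g_k(Cat_1) = 0$ by Observation~\ref{obs:gettingstarted}. Once these routine checks are done, the convex-combination step delivers the asymptotic rate essentially for free, avoiding any spectral analysis of $p(x)$.
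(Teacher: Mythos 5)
Your derivation of the recurrence is essentially the paper's: both identify a split $A|B$ of $Cat_n$ with $|A|=k$, observe that $Cat_n\setminus A$ and $Cat_n\setminus\{x\}$ are again caterpillars, and invoke Lemma~\ref{lem:decrease}; you merely spell out the topological bookkeeping (including the boundary case $k=n-1$, where $g_k(Cat_1)=0$ keeps the recurrence consistent) that the paper leaves implicit. Where you genuinely diverge is the asymptotic claim. The paper simply cites the standard theory of homogeneous linear recurrences, which, to yield $\Theta(\alpha^n)$, tacitly requires knowing that $\alpha$ is a simple root strictly dominating all other roots of $x^k-x^{k-1}-1$ in modulus and that its coefficient in the closed-form solution is nonzero. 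Your normalization $b_n=g_k(Cat_n)/\alpha^n$ sidesteps all of that: since $\alpha^{-1}+\alpha^{-k}=1$, the rescaled recurrence is a convex combination with positive weights, so the interval $I=[\alpha^{-(2k-1)},\alpha^{-k}]$ containing the $k$ consecutive base values $b_k,\dots,b_{2k-1}$ (all equal to $\alpha^{-j}$ by Observation~\ref{obs:gettingstarted}) is invariant under the recursion for $n\geq 2k$. This is correct, fully self-contained, and even delivers explicit multiplicative constants $\alpha^{-(2k-1)}\alpha^n\leq g_k(Cat_n)\leq\alpha^{-k}\alpha^n$; what it does not give, and the paper's route does, is the exact closed-form expression (as in the $g_3$ example following the lemma). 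Both arguments are sound; yours trades the sharper conclusion for rigor obtained by elementary means.
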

\begin{proof}
Due to its regular topology $Cat_n$ definitely contains a split $A|B$ where $|A|=k$. Observe that $\T \setminus A$ is a caterpillar, and $\T \setminus \{x\}$ (for any $x \in X$) is also a caterpillar. The recurrence now follows directly from Lemma \ref{lem:decrease}. \textcolor{black}{This is a homogeneous linear recurrence; it is well understood how to solve such recurrences (see standard textbooks such as \cite{knuth1989concrete}).}  In particular, it follows that $g_k(Cat_n)$ grows at rate $\Theta(\alpha^n)$ where $\alpha$ is the positive real root of the characteristic polynomial $x^k - x^{k-1}-1$. A precise expression for $g_k(Cat_n)$ can then be derived, if desired, by taking the initial terms of the recurrence into account. \end{proof}

\noindent
We give an explicit example for $g_3$. The closed expression is:
\[
g_3(Cat_n) = \bigg \lfloor 0.194225... \cdot 1.46557...^n + \frac{1}{2}\bigg \rfloor
\]
where 1.46557... is the real solution to $x^3 - x^2 -1 = 0$. The value 0.194225... is obtained by taking the real solution of $31x^3 - 31x^2 + 9x - 1=0$ and dividing it by $(1.46557...)^3$ to adjust for the fact that in this expression $n$ refers to the number of taxa. See: \url{http://oeis.org/A000930}\footnote{This recurrence is also known as \emph{Narayana's cows sequence}.}.\\
\\
Let $A|B|C$ be a tripartition of $X$.
We say that a tree $\T$ contains $A|B|C$ if there is a degree-3 node $u$ such that $A, B, C$ are the subsets of taxa of the three subtrees incident at $u$. 

Suppose $\T$ contains a tripartition $A|B|C$ where $|A|, |B|, |C| \geq 2$. If we delete the $C$ subtree and replace it with a length-$|C|$ caterpillar inserted between the $A$ and $B$ subtrees, we obtain a new tree $\T'$ and say that this is the result of \emph{linearizing} the $C$ subtree. See Figure \ref{fig:linearize} for an example.

\begin{figure}[h]
\centering
\includegraphics[scale=0.20]{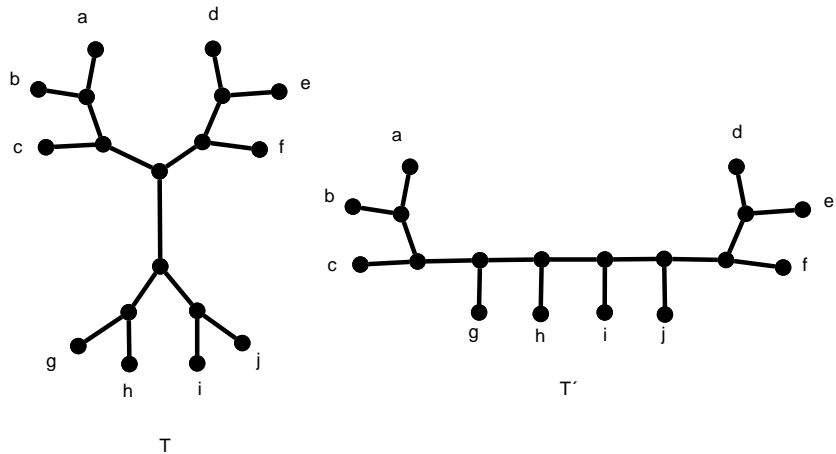}
\caption{An example of the linearization operation, acting on the tripartition $A|B|C$ of $\T$ where $A=\{a,b,c\}$, $B = \{d,e,f\}$ and $C=\{g,h,i,j\}$, to yield $\T'$. Linearization replaces subtree $C$  by a caterpillar.}
\label{fig:linearize} 
\end{figure}

\begin{lemma}
\label{lem:linearize}
Let $k \geq 3$. Suppose $\T$ contains a tripartition $A|B|C$ where $
|A|,|B|,|C| \geq 2$ and $|C|<k$. Let $\T'$ be the result of linearizing the $C$ subtree. Then $g_k(\T') \geq g_k(\T)$ and $\T'$ has fewer cherries than $\T$.
\end{lemma}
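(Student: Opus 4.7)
My plan is to prove the cherry-count assertion by a direct structural count and the inequality $g_k(\T') \geq g_k(\T)$ by exhibiting an injection of $g_k$ characters.

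For the cherry count, I would first note that in $\T$ every cherry lies entirely inside one of the three subtrees incident to the central vertex $u$ witnessing the tripartition $A|B|C$: no cherry can involve $u$, because $|A|, |B|, |C| \geq 2$ forces each neighbour of $u$ to be internal. The $A$- and $B$-subtrees are preserved by linearization, so they contribute identical cherries in $\T$ and $\T'$. The $C$-subtree, being a binary tree on $|C| \geq 2$ leaves, contains at least one cherry, while its replacement in $\T'$ — a spine of $|C|$ degree-3 nodes, each carrying one pendant $C$-leaf, with endpoints glued to the $A$- and $B$-roots — creates no cherries at all: the pendants are each paired with internal spine neighbours, and at the two ends of the spine the $A$- and $B$-roots are themselves internal (using $|A|, |B| \geq 2$ once more). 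Hence $\T'$ strictly loses cherries.

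For the inequality, my plan is to show that the identity partition map sends $g_k$ characters of $\T$ to $g_k$ characters of $\T'$. Since block sizes are preserved, the $g_k$ condition carries over automatically; the only real content is preservation of convexity. The leverage comes from Observation \ref{obs:atleasttwo} applied to the split $(A \cup B) | C$ of $\T$: because $|C| < k$, every $g_k$ character $f$ of $\T$ has a unique state $X_i$ satisfying $C \subsetneq X_i$. Convexity of $f$ on $\T$ then forces every other state $X_j$ to lie entirely in $A$ or entirely in $B$, since otherwise $\T[X_j]$ would cross through $u$, which $\T[X_i]$ already uses.

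The step I expect to be the main obstacle is verifying convexity of $f$ on $\T'$. I would decompose $\T'$ into the (unchanged) $A$-subtree, the (unchanged) $B$-subtree, and the new $C$-spine. For each $j \neq i$, the span $\T'[X_j]$ coincides with $\T[X_j]$: the state $X_j$ sits inside one of the two preserved subtrees, whose internal structure is untouched, and $\T'[X_j]$ does not reach the spine. The span $\T'[X_i]$ uses the entire spine and all its pendant edges (since $C \subseteq X_i$) together with the spans of $X_i \cap A$ and $X_i \cap B$ inside their respective subtrees, which coincide with the corresponding pieces of $\T[X_i]$. Pairwise disjointness in $\T'$ then splits into three sub-checks: on the spine only $X_i$ is involved; inside the $A$-subtree and inside the $B$-subtree disjointness is inherited verbatim from convexity of $f$ on $\T$. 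This gives the injection and hence the lemma.
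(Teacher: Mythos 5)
Your proposal is correct and follows essentially the same route as the paper: both apply Observation \ref{obs:atleasttwo} to the split $(A\cup B)|C$ (using $|C|<k$) to get a unique state strictly containing $C$, deduce that no other state can straddle $A$ and $B$, and conclude that the identity map preserves convexity after linearization, with the cherry count handled by the same direct structural argument. Your write-up merely makes the convexity check on $\T'$ and the location of cherries more explicit than the paper does.
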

\begin{proof}
The crucial fact here is that $|C|<k$. Specifically, by Observation \ref{obs:atleasttwo}, a $g_k$ character $f$ for $\T$ must
contain a state $X_i$ that includes all of $C$ and
which additionally intersects with at least one of $A$ and $B$. As a result, there is no state $X_j \neq X_i$ in $f$ that intersects both $A$ and $B$, but not $C$ - because then $\T[X_j]$ and $\T[X_i]$ would intersect. Hence, $f$ is also a $g_k$ character of $\T'$.

The $C$ subtree contained at least one cherry, because $|C| \geq 2$; these are all destroyed by linearization. However, no new cherries are created because $|A|,|B| \geq 2$.
\end{proof}

\begin{theorem}
Let $k \geq 1$. $g_k(\T) \leq g_k(Cat_n)$ for every tree $\T$ on $n$ taxa. 
\end{theorem}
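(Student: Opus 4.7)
The plan is induction on $n$, using Lemma \ref{lem:decrease} and Lemma \ref{lem:linearize} as the two reduction tools. For $k \in \{1,2\}$ the theorem is immediate from the topology-independence of $g_1$ and $g_2$ established in \cite{kelk2017note}. For $k \geq 3$ the base case $n < 2k$ follows from Observation \ref{obs:gettingstarted}, since both sides of the inequality lie in $\{0,1\}$ and agree.

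The inductive step hinges on a \emph{dichotomy}: every non-caterpillar tree $\T$ on $n \geq 2k$ taxa satisfies at least one of (i) $\T$ contains a split $A|B$ with $|A|=k$, or (ii) $\T$ contains a tripartition to which Lemma \ref{lem:linearize} applies. Granting the dichotomy, the rest is easy. If $\T$ is a caterpillar we have equality. Under (i), Lemma \ref{lem:decrease} combined with the inductive hypothesis and Lemma \ref{lem:catrecurse} yields
\[
g_k(\T) = g_k(\T \setminus A) + g_k(\T \setminus \{x\}) \leq g_k(Cat_{n-k}) + g_k(Cat_{n-1}) = g_k(Cat_n).
\]
Under (ii), Lemma \ref{lem:linearize} produces $\T'$ on the same $n$ taxa with $g_k(\T') \geq g_k(\T)$ and strictly fewer cherries; iterating the dichotomy along $\T, \T', \T'', \ldots$ must terminate (the cherry count is bounded below by $2$) at either a caterpillar (done by equality) or a tree falling into case (i).

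To prove the dichotomy, I would begin at an internal node $v$ whose three incident subtrees each contain $\geq 2$ taxa --- the existence of such a node being exactly what distinguishes non-caterpillars from caterpillars. Writing the tripartition at $v$ as $(|A|,|B|,|C|)$ with $|A|\leq|B|\leq|C|$: if $|A|<k$ we are in case (ii); if $|A|=k$ the edge between $v$ and the size-$k$ subtree is a size-$k$ split, giving case (i). The substantive subcase is $|A|>k$, where all three parts have size $\geq k+1$. Here I would descend recursively into the smallest subtree: at each visited node $r$ with current subtree size $|T|\geq k+1$, the outward part of $r$'s tripartition in $\T$ has size $n-|T|\geq 2(k+1)\geq 2$, so a routine case analysis on the two inward-child sizes $t_1,t_2$ (with $t_1+t_2=|T|$) shows that either (i) or (ii) already holds at $r$, or we descend into a strictly smaller subtree of size still $\geq k+1$. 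When the descent reaches $|T|=k+1$, the constraint $t_1+t_2=k+1$ is forcing: $(1,k)$ gives case (i), and both $\geq 2$ gives $\min(t_1,t_2)\leq\lfloor(k+1)/2\rfloor\leq k-1$, which is case (ii); so the recursion terminates with one of (i), (ii).

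The main obstacle is establishing the dichotomy itself. A tempting but incorrect simplification would be to hope that Lemma \ref{lem:linearize} alone applies somewhere in every non-caterpillar tree, so that repeated linearization always reaches a caterpillar. This fails: for $k=3$, the tree on $9$ taxa consisting of three length-$3$ pendant branches meeting at a central node has every tripartition either containing a size-$1$ part or consisting of three parts of size exactly $k$, so Lemma \ref{lem:linearize} is applicable nowhere, and one must fall back on Lemma \ref{lem:decrease} via the size-$k$ branch splits. The dichotomy guarantees that at least one of the two tools is always available, and the recursive descent is the technical heart of the argument.
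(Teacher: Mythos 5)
Your proposal is correct and follows essentially the same route as the paper's proof: the same dichotomy (linearize via Lemma \ref{lem:linearize} to reduce the cherry count, or find a split of size exactly $k$ and apply Lemma \ref{lem:decrease} together with the caterpillar recurrence of Lemma \ref{lem:catrecurse}), with your induction on $n$ plus inner iteration on cherries being equivalent to the paper's induction on $n+t$. The only substantive variation is how the size-$k$ split is located when linearization fails: you use a recursive descent into the smallest subtree, whereas the paper argues more succinctly by choosing a tripartition $A|B|C$ with $|C|\leq k$ maximal and showing $|C|=k$ by extending $C$ past a singleton neighbour; both arguments are sound.
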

\begin{proof}
For $k \in \{1,2\}$ the result follows automatically because $g_1$ and $g_2$ are invariant for topology. Henceforth we assume $k \geq 3$.

We use induction on $n+t$, where $t$ is the number of cherries in $\T$. For the base case, recall from Observation \ref{obs:gettingstarted} that for $n < k$
we have $g_k(\T) = 0$,
and for $k \leq n < 2k$ we have $g_k(\T) = 1$, irrespective of the topology of $\T$, so the claim holds vacuously when $n<2k$. Now, observe that if $n+t \leq 2k$ and $n\geq 4$, then $n<2k$ (because $t \geq 2$ in any tree with at least 4 taxa) - so, again, \textcolor{black}{by topological neutrality} the claim holds. For $n\leq 3$, the definition of $t$ is somewhat ambiguous, but however one defines it we have $t \leq 3$, so $n+t \leq 6 \leq 2k$, and the claim clearly holds for such \textcolor{black}{very small} trees. Summarizing, we have shown that the claim always holds for $n+t \leq 2k$, which concludes the base case.

Next, the inductive step. Assume that the claim holds for all trees where $n+t \leq N$ (where $N \geq 2k$). We show that the claim holds for all trees where $n+t \leq N+1$. Consider, therefore, a tree $\T$ where $n+t$ is equal to $N+1$. (If no such tree exists, we are immediately done). If $n < 2k$, then we are immediately done by the earlier argument. So assume that \textcolor{black}{$n \geq 2k$}.

Suppose $\T$ contains a tripartition $A|B|C$ tripartition where $|A|,|B|,|C| \geq 2$ and $|C|<k$. Then by linearizing $\T$ to obtain $\T'$, we obtain via Lemma \ref{lem:linearize} that $g_k(\T') \geq g_k(\T)$ and that $\T'$ has fewer cherries than $\T$, but the same number of taxa, so by induction we have $g_k(Cat_n) \geq g_k(\T') \geq g_k(\T)$ and we are done. 

Let us therefore assume that the tripartition from the previous paragraph does not exist. Instead, consider any tripartition $A|B|C$ such that $|A|,|B| \geq 1$, $|C| \leq k$ and $|C|$ is maximized. (Such a tripartition must exist, because $\T$ contains a cherry, so taking $|C|=2$ is always possible in the worst case.) We claim that $|C|=k$. Suppose this was not so, i.e. that $2\leq |C| <k$. Then exactly one of $|A|$ and $|B|$ would be equal to 1. If neither was equal to 1, we would be in the earlier linearization case, and if they would both be equal to 1, then $n = |A|+|B|+|C| = |C|+2 \leq k+1 < 2k$ which is not possible due to the earlier assumption that \textcolor{black}{$n \geq 2k$}. So assume without loss of generality that $|B|=1$ and $|A| \geq 2$. It follows that $\T$ contains a new tripartition $A'|B'|C'$ where $|C'| = |C|+1$, $|A'|, |B'| \geq 1$ and $|A'| + |B'| = |A|$. \textcolor{black}{However, this new tripartition yields a contradiction to the assumption that $|C|$ 
was maximum: $A'|B'|C'$ also satisfies the required conditions but has $|C|'  > |C|$ (and $|C|' \leq k$ because $|C| < k$). So, indeed, $|C|=k$.}

Now, it follows that $\T$ contains the split $C|(A \cup B )$.  
Hence, Lemma \ref{lem:decrease} can be applied to $\T$. This gives:
\[
g_k(\T) = g_k(\T \setminus C) + g_k(\T \setminus \{x\})
\]
where $x$ is an arbitrary taxon in $C$. Tree $\T \setminus C$ has $n-k$ taxa and less than or equal to the same number of cherries as $\T$ (because at most one new cherry can be created by pruning the $C$ subtree, but at least one is destroyed, since $|C| = k \geq 2$). Tree $\T \setminus \{ x \} $ has $n-1$ taxa and less than or equal to the same number of cherries as $\T$. So the inductive claim holds for both these two trees. So $g_k(\T \setminus C) \leq g_k(Cat_{n-k})$ and $g_k(\T \setminus \{x\}) \leq g_k(Cat_{n-1})$. Hence, by Lemma \ref{lem:catrecurse} we have that $g_k(\T) \leq g_k(Cat_n)$, and we are done.
 \end{proof}

\begin{corollary}
For every $n$, the maximum value of $g_k$ ranging over all trees on $n$ taxa is $g_k(Cat_n)$, which is $\Theta(\alpha^n)$, where $\alpha$ is the positive real root of the characteristic polynomial $x^k - x^{k-1}-1$.
\end{corollary}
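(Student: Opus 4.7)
The plan is essentially to observe that this corollary is a direct consequence of the two main results already established: the preceding theorem, which shows $g_k(\T) \leq g_k(Cat_n)$ for every tree $\T$ on $n$ taxa, and Lemma \ref{lem:catrecurse}, which determines the asymptotic growth of $g_k(Cat_n)$.

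First I would observe that the caterpillar $Cat_n$ is itself a tree on $n$ taxa, and trivially $g_k(Cat_n) = g_k(Cat_n)$, so the upper bound provided by the theorem is attained. This immediately justifies the claim that $g_k(Cat_n)$ is the maximum value of $g_k$ over all trees on $n$ taxa (for $k \geq 1$; for $k \in \{1,2\}$ the statement is vacuous since $g_k$ is topology-invariant, and the caterpillar is a valid witness).

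Next, for the growth rate, I would cite Lemma \ref{lem:catrecurse} directly: it already establishes that $g_k(Cat_n) \in \Theta(\alpha^n)$ where $\alpha$ is the positive real root of $x^k - x^{k-1} - 1$. Since the maximum equals $g_k(Cat_n)$, the same asymptotic bound holds for the maximum.

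There is no real obstacle here, since all the substantive work has already been done. The only minor point worth mentioning is to confirm that $x^k - x^{k-1} - 1$ has a unique positive real root (it does, since the polynomial is negative at $x=0$, negative at $x=1$, and tends to $+\infty$, and its derivative $kx^{k-1} - (k-1)x^{k-2} = x^{k-2}(kx - (k-1))$ is positive for $x > (k-1)/k$, so there is exactly one positive real root, lying in $(1, 2)$). Thus $\alpha$ is well-defined and the corollary follows.
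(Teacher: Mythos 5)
Your proposal is correct and matches the paper's approach: the corollary is stated without proof precisely because it is the immediate combination of the preceding theorem (caterpillars attain the maximum) and Lemma \ref{lem:catrecurse} (the growth rate of $g_k(Cat_n)$), which is exactly how you argue it. Your extra check that $x^k - x^{k-1} - 1$ has a unique positive real root is a harmless, correct addition.
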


\section{Fully $k$-loaded trees minimize $g_k$ for every $k$}
\label{sec:full}

Having established that caterpillars maximize $g_k$, we now turn to the question of the minimum. We begin with a number of auxiliary lemmas and observations.

\begin{lemma}
\label{lem:nicecut}
Let $\T$ be a tree on $n > k$ taxa. Then $\T$ contains a split $A|B$ such that $k \leq |B| \leq 2(k-1)$.
\end{lemma}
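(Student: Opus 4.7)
The plan is a greedy descent in a rooted version of $\T$. I would root $\T$ at an arbitrary leaf $r$; under this rooting every non-root vertex has a well-defined parent, and every non-root internal vertex has exactly two children, since $\T$ is binary. For each non-root vertex $v$, let $\ell(v)$ denote the number of leaves in the subtree rooted at $v$, so that deleting the edge from $v$ to its parent yields a split whose ``$B$''-side has size exactly $\ell(v)$ (and whose ``$A$''-side contains at least $r$, hence is non-empty). The goal is then to find a non-root vertex $v$ with $k \leq \ell(v) \leq 2(k-1)$.

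The key observation is that the unique neighbour $u$ of $r$ satisfies $\ell(u) = n-1 \geq k$ (using $n > k$), so the set
\[
S = \{\,v \neq r : \ell(v) \geq k\,\}
\]
is non-empty. I would then choose $v \in S$ of maximum depth. Since $\ell(v) \geq k \geq 2$ and every leaf has $\ell$-value $1$, $v$ must be internal and therefore has two children $c_1, c_2$. By maximality of the depth of $v$, neither child lies in $S$, so $\ell(c_1), \ell(c_2) \leq k-1$, and hence
\[
k \;\leq\; \ell(v) \;=\; \ell(c_1) + \ell(c_2) \;\leq\; 2(k-1).
\]
The edge from $v$ to its parent therefore provides the desired split.

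I do not anticipate a real obstacle: the argument is a short pigeonhole-style descent exploiting only the binary structure of $\T$ and the hypothesis $n > k$ to guarantee $S \neq \emptyset$. The one point worth stating explicitly in the write-up is why $v$ cannot be a leaf, which is immediate from $k \geq 2$ (the case $k = 1$ is vacuous since the interval $[1, 0]$ is empty, so the lemma is implicitly restricted to $k \geq 2$ as needed by the subsequent bounds).
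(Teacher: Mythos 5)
Your proof is correct and is essentially the paper's own argument: the paper walks down from an arbitrary taxon along edges whose far side contains at least $k$ taxa until it gets stuck, which is exactly your ``deepest vertex $v$ with $\ell(v) \geq k$'' whose two children then each carry at most $k-1$ taxa. The only cosmetic difference is that you select such a vertex by maximum depth rather than by an explicit greedy walk; the bounding step $k \leq \ell(c_1)+\ell(c_2) \leq 2(k-1)$ is identical.
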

\begin{proof}
Pick an arbitrary taxon of $\T$ and orient the edges of $\T$ away from this taxon. We label each edge with the number of taxa reachable from the head of this edge by directed paths (i.e. the number of taxa on the ``far'' side of the split induced by this edge). Starting from the edge incident to our chosen taxon, which has label $n-1 \geq k$, we walk onto an outgoing edge if it is labelled by a number $\geq k$ (if there are two such edges we can break ties arbitrarily) and we continue this walk until it cannot be extended any further. Let $e$ be the edge we have reached. At this point both outgoing arcs from $e$ are labelled by at most $k-1$; if the label was higher we could have continued the walk. Hence, the label of $e$ is at most $2(k-1)$. Also, the sum of the labels on these two edges is at least $k$, because otherwise the walk would not have reached edge $e$ in the first place. Hence, $e$ induces a split with the desired property. \end{proof}

\begin{lemma}
\label{lem:atleasttwo}
Let $\T$ be a tree on $n \geq 3k-2$ taxa. Then $g_k(\T) \geq 2$.
\end{lemma}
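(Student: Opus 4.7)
The plan is to exhibit two distinct $g_k$ characters for any tree $\T$ on $n \geq 3k-2$ taxa. The first character is the trivial one: the partition of $X$ into a single state containing all $n$ taxa. This is always a convex character, and since $n \geq 3k-2 \geq k$, its single state has size at least $k$, so it is a $g_k$ character.

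For the second character, I would invoke Lemma \ref{lem:nicecut}. Since $n \geq 3k-2 > k$ (for $k \geq 2$), the lemma applies and produces a split $A|B$ with $k \leq |B| \leq 2(k-1)$. A quick arithmetic check then gives
\[
|A| = n - |B| \geq (3k-2) - 2(k-1) = k,
\]
so both sides of the split have at least $k$ taxa. The partition $A|B$ therefore satisfies the size constraint for $g_k$. Convexity follows directly from the fact that $A|B$ is a split: if $e$ is the edge whose deletion separates $A$ from $B$, then $\T[A]$ lies entirely in the $A$-side component of $\T \setminus e$ and $\T[B]$ lies entirely in the $B$-side component, so $\T[A]$ and $\T[B]$ share no vertex. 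Thus $A|B$ is a valid $g_k$ character, and it is distinct from the trivial one because it has two states rather than one. This yields $g_k(\T) \geq 2$.

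There is essentially no hard step here; the work has been done by Lemma \ref{lem:nicecut}, and the bound $n \geq 3k-2$ is calibrated precisely so that the ``large'' side of the split produced by that lemma is still large enough to be a legitimate $g_k$ state. The only thing to be careful about is verifying the disjointness of $\T[A]$ and $\T[B]$, which follows immediately from the definition of a split.
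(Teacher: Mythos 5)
Your proposal is correct and follows essentially the same route as the paper: invoke Lemma \ref{lem:nicecut} to obtain a split $A|B$ with $k \leq |B| \leq 2(k-1)$, check that $|A| \geq (3k-2)-(2k-2) = k$, and pair the resulting two-state character with the trivial single-state character. The extra verification of convexity of $A|B$ is a harmless elaboration the paper leaves implicit.
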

\begin{proof}
Let $A|B$ be the split whose existence is guaranteed by Lemma \ref{lem:nicecut}. Due to the fact that $k \leq |B| \leq 2k-2$, we have that $|A| \geq |X|-(2k-2) \geq (3k-2) - (2k-2) \geq k$. Hence, the character with two states, $A$ and $B$, is definitely a $g_k$ character. Combined with the fact that $X$ (i.e. the character with a single size-$n$ state) is vacuously also a $g_k$ character, we have that $g_k(\T) \geq 2$.
\end{proof}

We say that a tree $\T$ on $n$ vertices is \emph{fully $k$-loaded} if it can be created by the following process, which is schematically illustrated in Figure \ref{fig:scaffold}. First, select a tree $S$ on $\lceil \frac{n}{k-1} \rceil$ taxa, we call this the \emph{scaffold} tree. If $n \text{ mod } (k-1) = 0$, we replace each leaf of $S$ with a subtree on $k-1$ taxa. If $n \text{ mod } (k-1) \neq 0$, we replace all but one of the leaves of $S$ with a subtree on $k-1$ taxa, and the remaining leaf with a subtree on $n \text{ mod } (k-1)$ taxa. We call this remaining subtree, if it exists, the \emph{residue} subtree (and the corresponding leaf of the scaffold the \emph{residue leaf}). We note that the subtrees used in this process, do not have to have the same topology, and that at least one fully $k$-loaded tree exists for every $n$. We will write \emph{fully loaded tree} when $k$ is implicit from the context. 

\begin{figure}[h]
\centering
\includegraphics[scale=0.18]{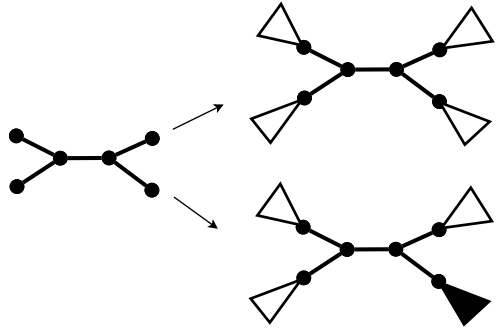}
\caption{The tree on the left is a scaffold tree with
4 vertices. If $n \text{ mod } (k-1) = 0$, then each of its 4 leaves is replaced by a subtree on $k-1$ taxa, depicted as white triangles in the top tree. If $n \text{ mod } (k-1) \neq 0$, then exactly one of these subtrees is allowed to have
$n \text{ mod } (k-1)$ taxa - this is the residue subtree - shown as a black triangle in the bottom tree. 
}
\label{fig:scaffold} 
\end{figure}

As an explicit example, consider the tree in Figure \ref{fig:g2}. This is a fully $4$-loaded tree, because you could obtain it from the (unique) scaffold tree on 3 leaves, by attaching subtrees with taxa $\{a,b,c\}$, $\{e,f,g\}$ and $\{d\}$ to the leaves of the scaffold. In this case, $\{d\}$ would be the residue subtree, because it does not contain exactly $4-1=3$ taxa. The tree is actually also fully $5$-loaded, by taking a single edge as the scaffold, and attaching subtrees on $\{a,b,c,d\}$ and $\{e,f,g\}$ (where here $\{e,f,g\}$ is the residue). However, the tree is not a fully $3$-loaded tree, because more than one of its taxa are not in cherries.


\begin{lemma}
\label{lem:allsame}
Let $\T$ be a fully $k$-loaded tree on $n$ taxa and $S$ be a corresponding scaffold tree. Then $g_k(\T) = g_2(S) = g_2(\lceil \frac{n}{k-1} \rceil)$. In particular: all fully $k$-loaded trees on $n$ taxa have the same $g_k$ value.
\end{lemma}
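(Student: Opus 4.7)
My plan is to establish a bijection between $g_k$ characters of $\T$ and $g_2$ characters of $S$, and then invoke the topology-invariance of $g_2$ noted in the introduction. Write $L_\ell$ for the leaf-subtree of $\T$ attached at leaf $\ell$ of $S$; by construction $|L_\ell| \leq k-1$, with equality except possibly at the residue leaf.

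The first step is to apply Observation \ref{obs:atleasttwo}. Each leaf-subtree induces a split $L_\ell | (X \setminus L_\ell)$ in $\T$, and since $|L_\ell| \leq k-1 < k$, every $g_k$ character of $\T$ contains a unique state that is a strict superset of $L_\ell$. Hence each $L_\ell$ is entirely contained in a single state. Thus an arbitrary $g_k$ character $X_1 | \ldots | X_s$ of $\T$ induces a partition of the leaves of $S$ via $Y_j := \{ \ell : L_\ell \subseteq X_j \}$. I would then check that this induced partition is a $g_2$ character of $S$: the size condition $|Y_j| \geq 2$ is forced because a single leaf-subtree contains at most $k-1 < k$ taxa, while convexity on $S$ will follow from the structural fact spelled out next.

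The structural fact I want to exploit is that contracting each $L_\ell$ to a single leaf in $\T$ returns exactly $S$, so the spanning subtree operation commutes with this contraction: $\T[X_j]$ decomposes as the union of the $L_\ell$ for $\ell \in Y_j$ glued along an isomorphic copy of $S[Y_j]$. Consequently, disjointness of $\T[X_i]$ and $\T[X_j]$ for $i \neq j$ forces disjointness of $S[Y_i]$ and $S[Y_j]$. Running the same argument in reverse, any $g_2$ character $Y_1 | \ldots | Y_s$ of $S$ lifts to the $g_k$ character of $\T$ with states $X_j := \bigcup_{\ell \in Y_j} L_\ell$: each state has at least $k$ taxa (in the worst case $|Y_j| = 2$ with one residue leaf-subtree of size $r \geq 1$ and one of size $k-1$, giving $|X_j| \geq k$), and convexity on $\T$ is inherited via the same decomposition. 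These maps are inverse to each other, so $g_k(\T) = g_2(S)$, and since $S$ has $\lceil n/(k-1) \rceil$ leaves and $g_2$ does not depend on topology, $g_2(S) = g_2(\lceil n/(k-1) \rceil)$, which yields the claim and, as a free corollary, the fact that all fully $k$-loaded trees on $n$ taxa share the same $g_k$ value.

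The main step to pin down is the structural identification of $\T$ with $S$ having its leaves blown up into subtrees, which underpins the convexity correspondence in both directions; the remaining work is bookkeeping with Observation \ref{obs:atleasttwo} and state-size arithmetic. One should also briefly cover the degenerate small-$n$ regime where $\lceil n/(k-1) \rceil \leq 1$ or the scaffold is trivial, in which case both sides of the claimed equality vanish or equal $1$, in agreement with Observation \ref{obs:gettingstarted}.
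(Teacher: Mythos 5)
Your proposal is correct and follows essentially the same route as the paper's proof: a correspondence between $g_2$ characters of the scaffold $S$ and $g_k$ characters of $\T$, using Observation \ref{obs:atleasttwo} to show each leaf-subtree lies in a single state, and the fact that at most one leaf-subtree has fewer than $k-1$ taxa to verify the size bounds in both directions. You spell out the bijection and the convexity correspondence more explicitly than the paper does, but the underlying argument is the same.
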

\begin{proof}
Recall that,
in the process of constructing $\T$ from $S$, at most one of the leaves of $S$ is replaced with a subtree with strictly fewer than $k-1$ taxa; all other leaves are replaced by subtrees with \emph{exactly} $k-1$ taxa. Now, each
state in a $g_2$ character of $S$ contains at least 2 leaves of $S$, so at least one of these leaves corresponds to a subtree of $\T$ with exactly $k-1$ taxa (and the other leaf corresponds to a subtree of $\T$ with at least 1 taxon). Hence,  when this $g_2$ character is mapped to a character of $\T$ each state in the character of $\T$ has at least $k$ taxa i.e. it is a $g_k$ character. 

In the other direction, consider an arbitrary $g_k$ character of $\T$. 
The subtrees of $\T$ corresponding to leaves of $S$ all have at most $k-1$ taxa, so any $g_k$ character of $\T$ must intersect with taxa from at least two such subtrees. When projected back to a character $S$ in the natural fashion, this induces a character of $S$ that has at least two leaves per state.

The second inequality follows from \cite{kelk2017note}, where it is established that $g_2$ depends only on $n$, and not the topology of the tree.
\end{proof}

A subtree of $\T$ is \emph{pendant} if it can be detached from $\T$ by deleting a single edge. The subtrees attached in the construction of fully loaded trees are all pendant, for example. The following lemma, which applies to all trees, shows that the topology of ``small' (with respect to $k$) pendant subtrees  is irrelevant for determining $g_k$.

\begin{lemma}
Let $\T$ be a tree on $n>k$ taxa. Suppose $\T$ contains a split $A|B$ where $|B| \leq k$. Let $\T'$ be the tree obtained by replacing the $B$ subtree with an arbitrary other subtree on the same subset of taxa. Then $g_k(\T) = g_k(\T')$.
\end{lemma}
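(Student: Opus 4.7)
The plan is to apply Observation \ref{obs:atleasttwo} to show that in both $\T$ and $\T'$ the set $B$ is forced to lie inside a single state of any $g_k$ character, and then to exhibit the natural identity map on partitions of $X$ as a bijection between $g_k$ characters of $\T$ and $g_k$ characters of $\T'$. The size constraint $|X_i| \geq k$ is combinatorial and does not see the tree at all, so the only real content is showing that convexity is preserved under this identification.

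Fix any partition $f = X_1|X_2|\dots|X_m$ of $X$ in which every block has at least $k$ taxa. By Observation \ref{obs:atleasttwo}, applied to both $\T$ and $\T'$ (they share the split $A|B$), if $f$ is convex on \emph{either} tree then exactly one block, say $X_j$, intersects $B$, and in fact $B \subseteq X_j$. For every other block $X_i$ with $i \neq j$ we have $X_i \subseteq A$, so $\T[X_i]$ lies entirely on the $A$-side of the split edge $e$, and $\T[X_i] = \T'[X_i]$, because $\T$ and $\T'$ agree on everything outside the $B$ subtree.

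Now consider $X_j$. Since $B \subseteq X_j$ and $B$ contains every leaf of the $B$-subtree, the minimal spanning subtree $\T[X_j]$ contains all of the $B$-subtree of $\T$; analogously, $\T'[X_j]$ contains all of the $B$-subtree of $\T'$. On the $A$-side of $e$, however, $\T[X_j]$ and $\T'[X_j]$ are identical: both equal the minimal subtree spanning $(X_j \cap A)$ together with the $A$-endpoint of $e$, and this depends only on the (unchanged) $A$-side topology and on $X_j \cap A$. Hence the intersection pattern of $\T[X_j]$ with each $\T[X_i]$ ($i \neq j$) occurs entirely on the $A$-side and is the same as the intersection pattern of $\T'[X_j]$ with each $\T'[X_i]$. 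No other blocks meet $B$, so nothing on the $B$-side can spoil disjointness for either tree. Consequently $f$ is convex on $\T$ if and only if it is convex on $\T'$.

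Combining this with the observation that the ``all blocks of size $\geq k$'' condition is a property of the partition only, we get a bijection between the $g_k$ characters of $\T$ and those of $\T'$, yielding $g_k(\T) = g_k(\T')$. The only delicate point is the $X_j \supsetneq B$ case, where one might worry that the different $B$-subtree topology could change how $\T[X_j]$ sits in the tree; the fix is the observation that $B \subseteq X_j$ forces $\T[X_j]$ to swallow the entire $B$-subtree regardless of its shape, so the $B$-side contributes nothing to intersection checks with the other $X_i$.
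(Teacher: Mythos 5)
Your proof is correct and follows exactly the route the paper intends: the paper's entire proof is the single line ``This is a direct consequence of Observation \ref{obs:atleasttwo},'' and your argument is precisely the spelled-out version of that consequence (the unique state containing $B$ swallows the whole $B$-subtree regardless of its topology, all other states live on the unchanged $A$-side, so convexity is unaffected). No gaps; you have simply made explicit what the paper leaves implicit.
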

\begin{proof}
This is a direct consequence of Observation \ref{obs:atleasttwo}.
\end{proof}

Our final auxiliary lemma shows that ``moderately large'' pendant subtrees (with respect to $k$) can be replaced by subtrees that, in a local sense, are fully $k$-loaded, \emph{without increasing $g_k$}.

\begin{lemma}
\label{lem:replace}
Let $\T$ be a tree on $n$ taxa. Suppose $\T$ contains a split $A|B$ such that $k \leq |B| \leq 2(k-1)$. Consider the unique fully $k$-loaded tree $\mathcal{F}$ on $|B|$ taxa obtained from the scaffold tree that is just a single edge $e$. Create a new tree $\T'$ on $n$ taxa by replacing the size-$|B|$ subtree of $\T$, with $\mathcal{F}$, where $\mathcal{F}$ is attached by subdividing $e$. Then $g_k(\T) \geq g_k(\T')$.
\end{lemma}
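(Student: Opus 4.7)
The plan is to parametrise the $g_k$ characters of $\T$ and $\T'$ by how they interact with the split $A|B$, observe that the $A$-side is identical in $\T$ and $\T'$, and then show that the one class of characters whose count depends on the shape of the $B$-subtree is empty in $\T'$. Since $|B| \leq 2(k-1) < 2k$, at most one state of any $g_k$ character can be entirely contained in $B$ (two such states would need at least $2k$ taxa together), and at most one state can use the split edge and so cross between $A$ and $B$. Consequently every $g_k$ character falls into exactly one of three cases: (I) $B$ is itself a single state, with no crossing state; (II) all of $B$ is absorbed into a single crossing state that also contains at least one taxon from $A$; (III) $B$ decomposes as a disjoint union $B_1 \cup B_*$ where $B_1$ is a state entirely in $B$ of size at least $k$ and $B_*$ is the $B$-part of a crossing state. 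In case (III) we have $|B_*| \leq |B|-k \leq k-2$, so the crossing state must contain at least $k-|B_*| \geq 2$ taxa from $A$.

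Next I would argue that the numbers $N_I$ and $N_{II}$ of type-(I) and type-(II) characters depend only on the $A$-side topology. For case (I), the $B$-side is completely determined, so $N_I(\T) = g_k(\T \setminus B)$. For case (II), one picks a nonempty $A_* \subseteq A$ such that, together with the split edge, it induces a subtree of the $A$-side which is disjoint from a $g_k$-compatible partition of $A \setminus A_*$; both ingredients depend only on the $A$-side tree and on the vertex where the split edge attaches, neither of which changes when the $B$-subtree is replaced by $\mathcal{F}$. Therefore $N_I(\T) = N_I(\T')$ and $N_{II}(\T) = N_{II}(\T')$, and the comparison reduces to $N_{III}(\T)$ versus $N_{III}(\T')$.

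The core geometric step is to show that $N_{III}(\T') = 0$. Convexity pins down $B_1$ as the taxa set on the far-from-boundary side of some cut edge inside the $B$-subtree, that is, a pendant subtree of the $B$-side from the viewpoint of the attachment to $A$. By construction $\mathcal{F}$ is obtained from a single-edge scaffold by hanging a subtree on $p = k-1$ taxa at one endpoint and a residue subtree on $q = |B|-(k-1)$ taxa, with $1 \leq q \leq k-1$, at the other; after subdividing the scaffold edge to attach the $A$-side, every pendant subtree of $\mathcal{F}$ from the new attachment point lies entirely inside one of these two pieces and hence has at most $\max(p,q) = k-1 < k$ taxa. So no admissible $B_1$ exists in $\T'$, giving $N_{III}(\T') = 0 \leq N_{III}(\T)$ and thus $g_k(\T) \geq g_k(\T')$.

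The main obstacle I foresee is the bookkeeping around case (III): one has to justify carefully that convexity really does force $B_1$ to be the taxa set of a single pendant subtree (rather than a union of several pieces cut off by different edges), and that the three-case parametrisation is exhaustive and disjoint. Once those points are settled, the hypothesis $|B| \leq 2(k-1)$ immediately bounds all pendant-subtree sizes inside $\mathcal{F}$ by $k-1$, and the desired inequality falls out.
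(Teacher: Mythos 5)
Your proof is correct, and its core coincides with the paper's, but you take a more laborious route. The paper's proof is a two-line containment argument: applying Observation \ref{obs:atleasttwo} separately to the two pendant pieces of $\mathcal{F}$ (one with exactly $k-1$ taxa, one with at most $k-1$) shows that every $g_k$ character of $\T'$ has a single state containing all of $B$ (the two states guaranteed by the observation must coincide, since otherwise both of their spanning trees would contain the subdivision vertex), and any such partition of $X$ is then convex on $\T$ as well because the internal topology of the $B$-subtree no longer matters; hence the $g_k$ characters of $\T'$ form a subset of those of $\T$. Your three-case decomposition unpacks the same fact: cases (I) and (II) are precisely the characters with all of $B$ in one state, and showing that $N_I$ and $N_{II}$ depend only on the (unchanged) $A$-side, together with $N_{III}(\T')=0$, amounts to the paper's containment plus the extra --- correct but not strictly needed --- verification that types (I) and (II) are counted identically in both trees. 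The obstacle you flag in case (III) does need to be closed, but not in the form you state: $B_1$ need not be exactly the taxa set of a pendant subtree. It suffices to note that the crossing state's spanning tree contains the split edge and hence the attachment vertex $r$ on the $B$-side, so $\T'[B_1]$, being connected and disjoint from it, lies in a single component of the $B$-subtree minus $r$; in $\T'$ each such component carries at most $k-1$ taxa, so no $B_1$ with $|B_1|\geq k$ exists. With that sentence added, your argument is complete.
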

\begin{proof}
Note that, due to our choice of where to attach $\mathcal{F}$, the subtree $\mathcal{F}$ (when viewed as part of $\T$) consists of two subtrees: one containing exactly $k-1$ taxa, and the other containing at most $k-1$ taxa. Every $g_k$ character of $\T'$ must have a state that includes all the taxa from both subtrees: this is a consequence of applying Observation \ref{obs:atleasttwo} to both the subtrees. Any such character is also a $g_k$ character for $\T$.
\end{proof}

We now move towards our main result. The following lemma establishes the base case of the induction proof used in Theorem \ref{thm:lowerbound}.

\begin{lemma}
\label{lem:basecase}
For $k \leq n \leq 4k - 4$, there exists a fully $k$-loaded tree on $n$ vertices that minimizes $g_k$.
\end{lemma}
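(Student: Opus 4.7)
The plan is to case-split $n$ into three sub-ranges and, in each, exhibit a fully $k$-loaded tree whose $g_k$ value matches a general lower bound. The tools needed are Observation \ref{obs:gettingstarted}, Lemma \ref{lem:atleasttwo} and Lemma \ref{lem:allsame}, together with the elementary values $g_2(3)=1$ and $g_2(4)=2$.

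First, for $k \leq n \leq 2k-1$, Observation \ref{obs:gettingstarted} gives $g_k(\T)=1$ for every tree on $n$ taxa, and since at least one fully $k$-loaded tree exists for every $n \geq k$, the minimum is trivially attained. Next, for $2k \leq n \leq 3k-3$, the scaffold of any fully $k$-loaded tree on $n$ taxa has $\lceil n/(k-1) \rceil = 3$ leaves, since $n/(k-1) \in (2,3]$ throughout this range; Lemma \ref{lem:allsame} then yields $g_k(\T) = g_2(3) = 1$. Because the single-state character is always a $g_k$ character when $n \geq k$, we have $g_k \geq 1$ generally, so a fully $k$-loaded tree attains the minimum here as well. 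Finally, for $3k-2 \leq n \leq 4k-4$, the scaffold has $\lceil n/(k-1) \rceil = 4$ leaves, since $n/(k-1) \in (3,4]$ throughout this range; Lemma \ref{lem:allsame} gives $g_k(\T) = g_2(4) = 2$, while Lemma \ref{lem:atleasttwo} guarantees $g_k \geq 2$ on any tree with $n \geq 3k-2$, so the fully $k$-loaded tree is again a minimizer.

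The only real obstacle is arithmetic bookkeeping at the boundaries: confirming the values of $\lceil n/(k-1) \rceil$ and verifying that the three sub-ranges cover $k \leq n \leq 4k-4$ without gap or overlap (and noting that for $k \in \{1,2\}$ some sub-ranges collapse, but the remaining cases still cover the full range). Beyond that, the lemma is a direct assembly of the machinery already developed in the section.
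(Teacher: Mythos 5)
Your proposal is correct and follows essentially the same route as the paper's proof: the identical three-way case split ($k \leq n \leq 2k-1$ via Observation \ref{obs:gettingstarted}, $2k \leq n \leq 3k-3$ via a scaffold on 3 leaves giving $g_k=1$, and $3k-2 \leq n \leq 4k-4$ via a scaffold on 4 leaves giving $g_k=2$, matched against the lower bound of Lemma \ref{lem:atleasttwo}). Your version merely makes the appeal to Lemma \ref{lem:allsame} and the values $g_2(3)=1$, $g_2(4)=2$ more explicit, which the paper leaves implicit.
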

\begin{proof}
For $k \leq n \leq 2k-1$ this is immediate from Observation \ref{obs:gettingstarted} and the fact that fully $k$-loaded trees exist for every $n$. For $2k \leq n \leq 3k-3$, observe that any fully $k$-loaded tree $\T$ obtained from the unique scaffold tree on 3 leaves (i.e. the star tree with single degree-3 node incident to 3 leaves), has $g_k(\T) = 1$ which is minimal. For $3k - 2 \leq n \leq 4k-4$, observe that a fully $k$-loaded tree $\T$ obtained from the unique  scaffold tree on 4 vertices, has $g_k(\T)=2$. This must be minimum, due to the lower bound established by Lemma \ref{lem:atleasttwo}. 
\end{proof}

\begin{theorem}
\label{thm:lowerbound}
Let $n\geq k$. Every fully $k$-loaded tree $\T$ on $n$ taxa is a minimizer for $g_k$.
\end{theorem}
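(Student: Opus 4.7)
The plan is to prove this by strong induction on $n$, using Lemma \ref{lem:basecase} for the base case $k \leq n \leq 4k-4$. Setting $G(n) := g_2(\lceil n/(k-1)\rceil)$, Lemma \ref{lem:allsame} gives $g_k(\mathcal{F}) = G(n)$ for every fully $k$-loaded $\mathcal{F}$, so it suffices to show $g_k(\T) \geq G(n)$ for every tree $\T$ on $n$ taxa.

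For the inductive step with $n \geq 4k-3$, I would first apply Lemma \ref{lem:nicecut} to obtain a split $A|B$ of $\T$ with $k \leq |B| \leq 2(k-1)$, and then apply Lemma \ref{lem:replace} to obtain $\T'$ with $g_k(\T) \geq g_k(\T')$. In $\T'$, the $B$-side has the mini fully $k$-loaded structure: a node $u$ incident to the rest of the tree and to two pendant subtrees $C$ of size $k-1$ and $D$ of size $r := |B| - (k-1) \in \{1,\ldots,k-1\}$.

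The central technical step is to derive the recurrence
\[
g_k(\T') = g_k(\T' \setminus B) + g_k(\T' \setminus D).
\]
By Observation \ref{obs:atleasttwo} applied to the splits $C|(A \cup D)$ and $D|(A \cup C)$, every $g_k$ character of $\T'$ has a state strictly containing $C$ and a state strictly containing $D$. These two states must coincide, for otherwise they are disjoint, both draw their extra taxa from $A$, and their spanning trees both traverse the single edge from $u$ into $A$, contradicting convexity. So every $g_k$ character of $\T'$ has a unique state $X_1 \supseteq B$. Characters with $X_1 = B$ are in bijection with $g_k$ characters of $\T' \setminus B$ (delete/restore the state $B$). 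Characters with $X_1 \supsetneq B$ are in bijection with $g_k$ characters of $\T' \setminus D$ via the map ``erase $D$ from $X_1$'', whose inverse adds $D$ back to the unique state of the $\T' \setminus D$ character that strictly contains $C$ (Observation \ref{obs:atleasttwo} again, applied in $\T' \setminus D$).

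To close the induction I would verify the Fibonacci-type inequality $G(n-k+1-r) + G(n-r) \geq G(n)$ for $r \in \{1,\ldots,k-1\}$. Writing $m(n') := \lceil n'/(k-1)\rceil$, the identity $m(n-k+1-r) = m(n-r)-1$ combined with the Fibonacci recurrence $g_2(m-1) + g_2(m) = g_2(m+1)$ yields $G(n-k+1-r) + G(n-r) = g_2(m(n-r)+1)$, and since $r \leq k-1$ forces $m(n) \leq m(n-r)+1$ this is at least $G(n)$. The inductive hypothesis then applies to both $\T' \setminus B$ (which has $n-k+1-r \geq k$ taxa for $n \geq 4k-3$) and $\T' \setminus D$ (which has $n-r \geq k$ taxa), completing the step. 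The main obstacle is the recurrence in the third paragraph: Lemma \ref{lem:decrease} only handles splits of size exactly $k$, and when $r \geq 2$ (equivalently $|B| > k$) it cannot be invoked directly, so one must exploit Observation \ref{obs:atleasttwo} on both $C$ and $D$ simultaneously to force them into a single state and derive the recurrence from scratch.
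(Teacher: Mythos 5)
Your proposal is correct, and the inductive step takes a genuinely different route from the paper's. Both arguments share the same opening moves (base case via Lemma \ref{lem:basecase}, then Lemma \ref{lem:nicecut} followed by Lemma \ref{lem:replace} to install the two pendant subtrees $C$, $D$ of sizes $k-1$ and $r$ at a common node $u$), and your two-term recurrence $g_k(\T') = g_k(\T' \setminus B) + g_k(\T' \setminus D)$ is a correct generalization of Lemma \ref{lem:decrease}; it coincides with the simplified form of the paper's three-term recurrence (the paper notes in a footnote that its first summand vanishes and its factor $g_k(\T|B\cup C)$ equals $1$), and your derivation of it from Observation \ref{obs:atleasttwo} applied to both $C$ and $D$ is sound. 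Where you diverge is in closing the induction: the paper works with the statement ``at least one fully $k$-loaded tree is a minimizer'' and proceeds constructively, replacing the $A$-subtree of a minimizer by a fully loaded tree with a carefully chosen attachment point, which forces a delicate second phase of induction when neither $|A|$ nor $|C|$ is divisible by $k-1$ (two residue subtrees must be cleaned up). You instead prove the quantitative statement $g_k(\T) \geq G(n)$ with $G(n) = g_2(\lceil n/(k-1)\rceil)$ directly, and close the step with the arithmetic identity $G(n-k+1-r) + G(n-r) = g_2(m(n-r)+1) \geq G(n)$, which follows from the Fibonacci recurrence and monotonicity of $g_2$ together with $m(n) \leq m(n-r)+1$ for $r \leq k-1$; I checked the ranges and the inductive hypothesis does apply to both subtrees for $n \geq 4k-3$. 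Your route buys a cleaner argument that avoids all of the paper's bookkeeping about residue leaves and attachment points; the paper's route buys the structural insight that an arbitrary minimizer can be morphed into a fully loaded tree by local surgery. The only (cosmetic) omission is that you should dispose of $k \in \{1,2\}$ separately, as the paper does, since $G$ and the residue arithmetic presuppose $k \geq 3$ (or at least $k \geq 2$).
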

\begin{proof}
We prove this by induction. Our inductive claim is not that \emph{every} fully $k$-loaded tree on $n$ taxa is minimum, but that \emph{at least one} is. The strengthening to \emph{every} will then follow automatically from Lemma \ref{lem:allsame}. We assume $k \geq 3$, because for $k \in \{1,2\}$ the result is immediate due to topological neutrality.

For the base case we take $n \leq 4k-4$. This is proven by Lemma \ref{lem:basecase}.

Now, assume that for all $n \leq N$ the claim holds, where $N \geq 4k-4$. Let $\T$ be an arbitrary tree on $n = N+1$ taxa that minimizes $g_k$. From Lemma \ref{lem:nicecut}, there exists a split $E|F$ in $\T$ such that $k \leq |F| \leq 2(k-1)$. By applying Lemma \ref{lem:replace}, we obtain a new tree $\T'$ on the same number of taxa (that also contains the split $E|F$) such that $g_k(\T) \geq g_k(\T')$, so $\T'$ is also a minimizer -- because by assumption $\T$ was a minimizer. For ease of notation, we will take $\T = \T'$ in the rest of the proof.

Crucially, by construction, the $F$ subtree of $\T$ now consists of one subtree containing \emph{exactly} $k-1$ taxa, and one subtree containing at least 1 and \emph{at most} $k-1$ taxa. Hence, $\T$ contains a tripartition $A|B|C$ where $|B|=k-1$ and $1 \leq |C| \leq k-1$ and $|A| > 2(k-1) \geq k$ (where the last inequality follows because $n > N \geq 4k-4$ and $k \geq 3$). Let $a,b,c$ be the cardinalities of $A, B, C$ respectively.

The following recurrence holds\footnote{The recurrence could actually be simplified because $g_k(\T|C)=0$, by virtue of the fact that $c < k$, and $g_k(\T|B \cup C)=1$, because $k \leq b+c < 2k$. However, this simplification is not necessary for
the proof and obfuscates the origin of the recurrence.}, because $b=k-1$:
\begin{align}
g_k(\T) = g_k(\T|A \cup B)g_k(\T|C) +
g_k(\T|A)g_k(\T|B \cup C) +
g_k(\T|A \cup B)g_k(\T|B \cup C).
\end{align}
The three terms of the recurrence are based on Observation \ref{obs:atleasttwo} i.e. that every character we wish to count has a state that is a \emph{strict} superset of $B$. Such a state must also intersect with (i) $A$ (but not $C$), or (ii) $C$ (but not $A$) or (iii) both.
The third term, which models (iii), requires a little explanation. It is correct because every such state can be decomposed into a part that intersects both $B$ and $A$, and a part that intersects both $B$ and $C$; both these parts have size at least $k$. In the other direction, $g_k$ characters for $\T|A \cup B$ (respectively, $\T|B \cup C$) necessarily contain a state that contains all of $B$, and at least one taxon from $A$ (respectively, $C$), so every pairing of a $g_k$ character
from $\T|A \cup B$ with a $g_k$ character
from $\T|B \cup C$ yields a character of type (iii).


Note that $a,b,c, a+b,b+c$ are all strictly less than $n$, so the inductive claim holds for all trees restricted to subsets of taxa of these sizes.
Let $g_k^{fl}(n)$ denote the value of $g_k$ for a fully $k$-loaded tree on $n$ taxa; as proven earlier this is independent of topology. By induction we therefore have,
\begin{align}
\label{ineq:induc}
g_k(\T) \geq g_k^{fl}(a+b)g_k^{fl}(c) +  g_k^{fl}(a)g_k^{fl}(b+c) +
g_k^{fl}(a+b)g_k^{fl}(b+c).
\end{align}

The next part of the proof has two phases of applying the inductive hypothesis. If at least one of $a$ and $c$ is divisible by $k-1$, then only the first phase will be necessary.

We create a new tree $\T'$ on $n$ taxa by doing the following. We replace the $A$ subtree with an arbitrary fully $k$-loaded tree on $a$ taxa. The attachment point of this fully loaded tree should be a subdivided edge of its underlying scaffold tree. (This is necessary to avoid that we accidentally attach it ``inside'' one of its size-($k-1$) subtrees, which would destroy its fully $k$-loaded structure). Note that such a scaffold tree definitely contains at least one edge, because $a \geq k$.
Moreover, if $a$ is not divisible by $k-1$, we must use as attachment point a subdivision of the unique edge of the underlying scaffold tree that feeds into the residue leaf: in Figure \ref{fig:scaffold} this would mean the edge feeding into the black triangle. (The significance of this will be explained later).

We leave the $B$ and $C$ subtrees untouched. This completes the construction of $\T'$. 

Observe that the subtrees $\T'|A \cup B$, 
$\T'|C$,
$\T'|B \cup C$, $\T'|A$ are all fully $k$-loaded; for $\T'|B \cup C$ and $\T'|A \cup B$ this follows because $b=k-1$ and (for $\T'|A \cup B$) because of our choice of attachment point. $\T'|A$ is fully $k$-loaded by construction, and $\T'|C$ is vacuously fully $k$-loaded because $1 \leq c < k$.
Hence, $g_k(\T')$ is identical to the right hand side of inequality (\ref{ineq:induc}). Hence, $g_k(\T) \geq g_k(\T')$, so due to the fact that we assumed that $\T$ was a minimizer, $\T'$ is also a minimizer. Now, if at least one of $a$ and $c$ is divisible by $k-1$, it follows that $\T'$ is not only a minimizer, but also a fully $k$-loaded tree, so the proof is complete.

If neither $a$ nor $c$ is divisible by $k-1$, then $\T'$ will not be fully $k$-loaded. This is because the $A$ part of $\T'$ will contain a residue subtree, and the $C$ part of $\T'$ is (entirely)
a residue subtree, yielding two residue subtrees in total; a fully $k$-loaded tree has at most one residue subtree. In this situation, depicted in Figure \ref{fig:phasetwo}, a second phase of induction is necessary to ``clean up'' these unwanted subtrees. Observe that $a > 2k-2$, so the underlying scaffold tree for $A$ has at least 3 leaves (because a fully $k$-loaded tree with an underlying scaffold consisting of just 2 leaves, can have at most $2(k-1)$ taxa). Hence, the underlying scaffold tree contains a cherry, such that neither leaf of the cherry is a residue leaf. In more detail: if the scaffold has 4 or more leaves, it will have at least 2 cherries, and the residue leaf can be part of at most one of them. If the scaffold has 3 leaves, then any pairing of 2 leaves forms a cherry, so take the 2 non-residue leaves.

\begin{figure}[h]
\centering
\includegraphics[scale=0.2]{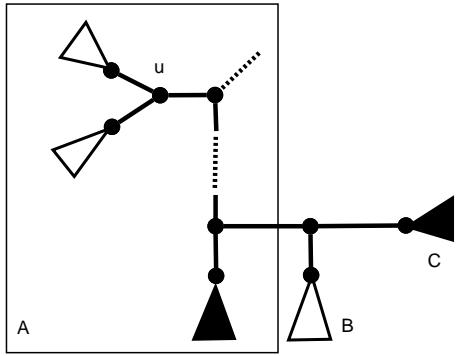}
\caption{If neither $|A|$ nor $|C|$ is divisible by $k-1$, then $\T'$ will not yet be fully loaded, due to having \emph{two} residue subtrees (shown as black triangles). In this case a second application of induction is required.
}
\label{fig:phasetwo} 
\end{figure}

Hence, both leaves of this cherry expand to subtrees with exactly $k-1$ taxa. By construction the attachment point explicitly avoids both these subtrees and the two edges feeding into them; this is the significance of the earlier careful choice of attachment location in the case that $a$ is not divisible by $k-1$. As a result, these two subtrees with $k-1$ taxa have a common parent in $\T'$ (this is the node $u$ in Figure \ref{fig:phasetwo}). This means that $\T'$ contains a tripartition $A'|B'|C'$ where $|B'|=|C'|=k-1$, by taking these two size $k-1$ subtrees as $B'$ and $C'$. We repeat the original induction argument with this new tripartition, yielding a new tree $\T''$ with the property that $g_k(\T') \geq g_k(\T'')$. Due to the fact that (at least) two parts of the new tripartition are divisible by $k-1$, $\T''$ will be fully $k$-loaded (i.e. we will only need the first phase of the proof), and a minimizer, so the inductive claim follows.
\end{proof}

\begin{corollary}
For $n \geq k \geq 2$, the minimum value of $g_k$ ranging over all trees on $n$ taxa is exactly
\[
\bigg \lfloor \frac{\phi^{\lceil \frac{n}{k-1} \rceil-1}}{\sqrt{5}} + \frac{1}{2} \bigg \rfloor.
\]

\end{corollary}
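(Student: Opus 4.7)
The plan is that this corollary is essentially a bookkeeping consequence of three ingredients that are all already available: Theorem \ref{thm:lowerbound}, Lemma \ref{lem:allsame}, and the closed-form expression for $g_2$ recalled at the start of Section \ref{sec:prelim}. I expect the whole proof to be only a few lines; no substantial obstacle is anticipated.

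First, I would invoke Theorem \ref{thm:lowerbound} to conclude that, for each $n \geq k$, the minimum of $g_k$ over all trees on $n$ taxa is realized by any fully $k$-loaded tree on $n$ taxa (such a tree exists for every $n$, as noted just after the definition). This turns the question of computing the minimum into the question of evaluating $g_k(\T)$ on a single, concrete $\T$.

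Next, I would apply Lemma \ref{lem:allsame} to that $\T$, which gives
\[
g_k(\T) \;=\; g_2\!\left(\Big\lceil \tfrac{n}{k-1} \Big\rceil\right).
\]
Since $k \geq 2$, the expression $\lceil n/(k-1) \rceil$ is well defined; this is the reason the corollary restricts to $k \geq 2$. Substituting $m = \lceil n/(k-1) \rceil$ into the identity
\[
g_2(m) \;=\; \bigg\lfloor \frac{\phi^{m-1}}{\sqrt{5}} + \frac{1}{2} \bigg\rfloor
\]
from Section \ref{sec:prelim} produces exactly the claimed expression.

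Finally, I would perform a brief boundary sanity check to make sure no edge case is missed. For $k \leq n < 2k$ one has $\lceil n/(k-1) \rceil \in \{2,3\}$, and the formula evaluates to $1$, in agreement with Observation \ref{obs:gettingstarted}. The case $k=2$ is also consistent, since a fully $2$-loaded tree is just an arbitrary tree (each scaffold leaf is replaced by a singleton), reducing the statement to the original $g_2$ formula. With these checks in place the proof is complete; the only ``work'' is verifying that the formula for $g_2$ can be applied with a non-integer-looking argument $\lceil n/(k-1) \rceil$, which is immediate because the ceiling is always a positive integer.
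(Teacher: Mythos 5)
Your proposal is correct and follows exactly the paper's own argument: Theorem \ref{thm:lowerbound} identifies fully $k$-loaded trees as minimizers, Lemma \ref{lem:allsame} reduces their $g_k$ value to $g_2(\lceil n/(k-1)\rceil)$, and substitution into the known closed form for $g_2$ gives the stated expression. The boundary checks you add are harmless but not needed.
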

\begin{proof}
Recall from the preliminaries that an exact expression for $g_2$ is
\begin{align*}
g_2(n) &= \bigg \lfloor \frac{\phi^{n-1}}{\sqrt{5}} + \frac{1}{2} \bigg \rfloor.\\
\end{align*}
The desired new expression is obtained by substituting $\lceil \frac{n}{k-1} \rceil$ for $n$ in the $g_2$ expression (see Lemma \ref{lem:allsame}), while Theorem \ref{thm:lowerbound} establishes that this is indeed minimum.
\end{proof}

Returning to our concrete example of $g_3$, this yields a minimum of $\Theta(\phi^{n/2})$, so $\Theta(1.272^n)$, contrasting with the
caterpillar-induced maximum of $\Theta(1.466^n)$. Given the gap between the maximum and minimum,
it is reasonable to conjecture that $g_3$ drops smoothly as the number of cherries $t$ in the
tree increases (because fully $3$-loaded trees consist primarily of cherries, with at most 1 taxon not in a cherry). The following bound shows that this is \emph{partially} the case.

\begin{lemma}
Let $\T$ be a tree on $n$ taxa with $t$ cherries. Then $g_3(\T)$ is $O( \phi^{n-t})$.
\end{lemma}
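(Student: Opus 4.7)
The plan is to compress all cherries of $\T$ into single leaves and thereby reduce the problem to counting $g_2$ characters on an auxiliary tree $\T^*$ with only $n-t$ leaves. The exponent $n-t$ in the desired bound is no accident: in any $g_3$ character the two leaves of each cherry are forced to lie in the same state, so intuitively each cherry costs one degree of freedom rather than two.

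First I would prove this \emph{cherry-confinement} claim: if $\{x,y\}$ is a cherry of $\T$ with common parent $u$, then in every $g_3$ character $x$ and $y$ lie in a common state. This is a direct application of Observation~\ref{obs:atleasttwo} to the split $\{x,y\}\mid (X\setminus\{x,y\})$; if $x$ and $y$ lay in distinct states $X_i, X_j$, each state would need at least one further taxon, and the resulting paths would force $u$ to sit inside both $\T[X_i]$ and $\T[X_j]$, violating convexity.

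Next I would construct $\T^*$ from $\T$ by deleting the two leaves of every cherry and retaining the cherry-parent as a fresh labelled leaf $\ell_{x,y}$; since distinct cherries are vertex-disjoint, $\T^*$ is a well-defined binary phylogenetic tree with exactly $n-t$ leaves. I would then define a map $\Phi$ sending $X_1\mid\cdots\mid X_m$ to $X_1^*\mid\cdots\mid X_m^*$, where each cherry pair $\{x,y\}\subseteq X_i$ is replaced by $\ell_{x,y}$. The cherry-confinement claim makes $\Phi$ well-defined, and $\Phi$ is patently injective because every $\ell_{x,y}$ expands to its original cherry in a unique way. The size bound $|X_i^*|\geq 2$ reduces to a short case check using the identity $|X_i^*|=|X_i|-c_i$, where $c_i$ is the number of cherries contained in $X_i$: if $c_i=0$ use $|X_i|\geq 3$, and if $c_i\geq 1$ combine $|X_i|\geq 3$ with $|X_i|\geq 2c_i$.

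The main obstacle is verifying that $\Phi$ actually produces a \emph{convex} character of $\T^*$. The key observation is that each cherry-parent $u$ can belong to $\T[X_i]$ only for the unique state $X_i$ that contains its cherry; two distinct states whose spanning subtrees both contained $u$ would already violate convexity in $\T$. Contracting each cherry together with its parent onto $\ell_{x,y}$ therefore preserves pairwise disjointness, so $\T^*[X_i^*]$ and $\T^*[X_j^*]$ stay disjoint in $\T^*$ for $i\neq j$. Combining injectivity of $\Phi$ with the closed form $g_2(m)=\lfloor\phi^{m-1}/\sqrt{5}+1/2\rfloor$ recalled in the preliminaries yields
\[
g_3(\T)\;\leq\;g_2(\T^*)\;=\;g_2(n-t)\;=\;O(\phi^{n-t}),
\]
which is the desired bound. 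Very small regimes (such as $n\leq 3$, where the value of $t$ is mildly ambiguous) are absorbed into the implicit constant.
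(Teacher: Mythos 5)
Your proposal is correct, and it reaches exactly the same numerical bound $g_2(n-t)=\lfloor\phi^{n-t-1}/\sqrt{5}+1/2\rfloor$ as the paper, but by the mirror-image route. The paper goes \emph{up}: it doubles each of the $n-2t$ taxa not in a cherry, producing a fully $3$-loaded tree $\T'$ on $2n-2t$ taxa, injects the $g_3$ characters of $\T$ into those of $\T'$ (each new taxon joins its sibling's state), and then invokes Lemma \ref{lem:allsame} to equate $g_3(\T')$ with $g_2$ of the scaffold on $n-t$ leaves. You go \emph{down}: you contract each cherry to a single labelled leaf, obtaining directly the tree $\T^*$ that is precisely the scaffold of the paper's $\T'$, and you inject the $g_3$ characters of $\T$ straight into the $g_2$ characters of $\T^*$. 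The substance of your extra work --- cherry confinement via Observation \ref{obs:atleasttwo}, the count $|X_i^*|=|X_i|-c_i\geq 2$, and the verification that $\T^*[X_i^*]\subseteq\T[X_i]$ so that disjointness is inherited --- is exactly the content that the paper outsources to the proof of Lemma \ref{lem:allsame}. What your version buys is self-containedness: it never mentions fully loaded trees and only uses the closed form for $g_2$; what the paper's version buys is brevity, since the expansion map is trivially convexity-preserving and the contraction argument has already been paid for elsewhere. One small presentational remark: the sentence ``contracting each cherry together with its parent onto $\ell_{x,y}$ therefore preserves pairwise disjointness'' is the one place a referee would ask you to slow down; the clean justification is the containment $\T^*[X_i^*]\subseteq\T[X_i]$ (every path in $\T^*$ between members of $X_i^*$ is a subpath of a path in $\T$ between members of $X_i$), from which disjointness is immediate.
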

\begin{proof}
Let $\T'$ be the fully $3$-loaded tree obtained from $\T$ by doubling the $n-2t$ taxa not in cherries. $\T'$ has $n+ (n-2)t = 2n-2t$ taxa. This is even, so from Lemma \ref{lem:allsame} we have that,
\begin{align*}
g_3(\T') &= \bigg \lfloor \frac{\phi^{n-t-1}}{\sqrt{5}} + \frac{1}{2} \bigg \rfloor.
\end{align*}
Observe that every $g_3$ character of $\T$ naturally maps injectively to a $g_3$ character of $\T'$, by putting each newly added taxon into the same state as its sibling. Hence $g_3(\T) \leq g_3(\T')$ and the result follows.
\end{proof}

Although the upper bound given above does converge on the true minimum (consider $t=n/2$), it remains quite weak, in the sense that once $t$ falls below (roughly) $n/5$, the caterpillar upper bound of $O(1.466^n)$ is stronger. It would be interesting to develop tighter upper bounds when $t$ is comparatively small compared to $n$.


\section{Applications}
\label{sec:app}
We note in the table below minimum and maximum values of $g_k$, for the first few values of $k$. Each entry  $\alpha$ indicates that the minimum (respectively,  maximum) grows at rate $\Theta(\alpha^n)$. For $k \geq 3$ we have used the results in this article: fully loaded $k$-trees determine the minimum and caterpillars determine the maximum.

\begin{table}[h]
\begin{tabular}{|c|c|c|}
\hline
     & \emph{minimum} & \emph{maximum} \\ \hline
$g_1$ & 2.618            &  2.618           \\ \hline
$g_2$ & 1.618            &  1.618           \\ \hline
$g_3$ & 1.272            &       1.466      \\ \hline
$g_4$ & 1.174            &     1.380       \\ \hline
$g_5$ & 1.128           &       1.325      \\ \hline
$g_6$ & 1.101          &         1.285    \\ \hline
\end{tabular}
\end{table}

The numbers in the right column, let us denote these $\alpha_k$, have an algorithmic significance.  Any algorithm that functions by looping through all the $g_k$ characters, and performing polynomial-time computation on each such character, will have a worst-case running time of the form
$O(\alpha_k^n \cdot \text{poly}(n))$. This is a consequence of the results from \cite{kelk2017note}, where it is shown how to compute $g_k$ in polynomial time, and (relatedly) how to list all $g_k$ characters in time $\Theta(g_k(\T) \cdot \text{poly}(n))$. Clearly, this worst-case running time will decrease as $k$ increases. 

This raises the question of how scalable such algorithms can be in practice.  To answer this, we took the code from \cite{kelk2017note} and for $1 \leq k \leq 6$ used it to determine the highest $n$ for which all $g_k$ characters could be listed within 1, 10 or 100 seconds. We did this twice: once on caterpillar trees, and once on randomly generated trees (which are typically far from being caterpillars due to being fairly balanced). The random trees were generated following the protocol described in \cite{van2020reflections}. The results are below\footnote{The experiment was conducted within
the Linux Subsystem (Ubuntu 16.04.6 LTS), running under Windows 10, on a 64-bit HP Envy Laptop 13-ad0xx (quad-core i7-7500 @ 2.7 GHz), with 8 Gb of memory.}.


\begin{table}[h]
\begin{tabular}{|c|c|c|c|c|c|c|}
\hline
   & \multicolumn{3}{c|}{\emph{caterpillar}} & \multicolumn{3}{c|}{\emph{random trees}} \\ \hline
   & 1s          & 10s          & 100s         & 1s          & 10s          & 100s         \\ \hline
\hline
$g_1$ &    14         &    16          &    19          &   14          &   16           &      19        \\ \hline
$g_2$ &     27        &     32         &   37           &    27         &   32           &  37            \\ \hline
$g_3$ &     34        &    41          &    47          &     38        &   49           &   55           \\ \hline
$g_4$ &     40        &    48          &    56          &   56          &    66          &   74           \\ \hline
$g_5$ &     47        &     56         &   64           &   73          &    84          &    96          \\ \hline
$g_6$ &     52        &     63         &    72          &     83        &   101           &    116          \\ \hline
\end{tabular}
\end{table}

We see that, for $k\geq 3$, the listing algorithms scale somewhat better on random trees than on caterpillars. Beyond extremely small $n$, real phylogenetic trees inferred from biological data rarely reach either topological extreme, so for such trees it is realistic to expect an intermediate level of scalability. 

Although the running times remain prohibitive for larger trees, they are still quite encouraging by the standards of exponential-time algorithms. This was noted in \cite{kelk2017note} where an algorithm based on listing $g_2$ characters was used to design a simple but surprisingly practical algorithm for exact computation of \emph{maximum parsimony distance} \cite{fischer2014maximum} on two phylogenetic trees. This later became the foundation for a far more scalable sampling-based heuristic for the same problem \cite{van2020reflections}; both the listing and sampling leverage the dynamic programming scaffolding originally used to actually count $g_k$. In \cite{kelk2017note} it was also observed that the \emph{maximum agreement forest} problem \cite{AllenSteel2001} can be solved in time $O(2.619^n)$ by listing $g_1$ characters. One can think of an agreement forest on a set of trees as a character that is convex on all trees and where the states induce the same topologies across all trees. There are many convex characters that do not induce agreement forests, but - crucially - every agreement forest \emph{does} induce a convex character. 

Indeed, any optimization or decision problem that `projects down' onto convex characters, can potentially be tackled by such listing algorithms. The flexibility of such an approach is that one only requires an algorithm to check whether a convex character actually has the desired, typically stronger, property, and thus little time needs to be spent on algorithm design or research. The new upper bounds given in this article additionally allow us to bound the worst-case running time of such algorithms. Some simple examples of problems that could easily be modelled within this `convex character programming' framework are listed below. In all cases we assume the input is a set of phylogenetic trees $\mathbb{T}$ on $X$ whereby $|\mathbb{T}|$ is $O(\text{poly}(n))$. The notation $O^{*}(.)$ denotes that polynomial factors are suppressed. 
\begin{itemize}
    \item \emph{Partition problems with lower bounds:} Compute an agreement forest for $\mathbb{T}$ with a minimum number of components, such that every component of the forest contains at least $k$ taxa, or state that no such forest exists. Here $g_k$ can be used, yielding a running time of $O^{*}( \alpha_k^n)$. Generalizes easily to the variant where disjointness is only required in one or more of the input trees.
    \item \emph{Exact partition problems: } Determine whether  it is possible to perfectly partition $X$ into size-4 blocks, such that in each tree in $\mathbb{T}$ the induced size-4 subtrees (``quartets'') are disjoint and have the same topology. Here $g_4$ can be used, yielding a running time of $O^{*}(1.38^n)$.
    \item \emph{Character inference problems subject to an objective function: } Let $\T$ be a specified tree in the input set $\mathbb{T}$. Find a $g_k$ character $f$ on $\T$ that optimizes a polynomial-time computable objective function $Z(f,\mathbb{T})$, such as sum of parsimony scores, or sum of log likelihoods subject to a chosen statistical model of evolution. Here $g_k$ can be used, yielding a running time of $O^{*}( \alpha_k^n)$. Can be generalized to the case when $\T \in \mathbb{T}$ is not given and the choice of tree from $\mathbb{T}$ needs to be co-optimized.
\end{itemize}

\section{Future work}
\label{sec:future}
It would be particularly interesting to quantify the growth of $g_k$ between the two topological extremes of fully $k$-loaded trees and caterpillars. For $g_3$, for example, it is clear that the value drops towards its minimum as the number of cherries in a tree increases, but as discussed earlier it is only a partial explanation for the behaviour of $g_3$. Thus, it is necessary to identify which topological features of trees, if any, are sufficient (and possibly necessary) to yield a decrease in $g_k$. It would also be interesting to dive deeper into a question posed in \cite{kelk2017note}: what are necessary and sufficient conditions for two non-isomorphic trees on $n$ taxa to induce the same vector of $g_k$ values?
How might such a vector of $g_k$ values change under the action of \emph{tree rearrangement} operations? These are commonly used to heuristically navigate through the space of trees in the process of inferring phylogenetic trees from sets of characters \cite{john2017shape}; see \cite{bryant2004splits} for similar questions.



\section{Acknowledgements}

Ruben Meuwese was supported by the Dutch Research Council (NWO) KLEIN 1 grant \emph{Deep kernelization for phylogenetic discordance}, project number OCENW.KLEIN.305.

\bibliographystyle{plain}
\bibliography{Convex2021}

\end{document}